\DeclareFontFamily{OT1}{ptm}{}
\tikzstyle{startstop} = [rectangle, rounded corners, minimum width=0.5cm, minimum height=0.5cm,text centered, text width=2cm, draw=black, fill=white!30]
\tikzstyle{arrow} = [thick,->,>=stealth]
\newtheorem{theorem}{Theorem}[section]
\newtheorem{thm}[theorem]{Theorem}
\newtheorem{lemma}[theorem]{Lemma}
\newtheorem{conjecture}[theorem]{Conjecture}
\newtheorem{question}[theorem]{Question}
\newtheorem{definition}[theorem]{Definition}
\newtheorem{rmk}[theorem]{Remark}
\newtheorem{example}[theorem]{Example}
\newtheorem{prop}[theorem]{Proposition}
\newcommand{\mbf}{\mathbf}
\newcommand{\mb}{\mathbb}
\newcommand{\mc}{\mathcal}
\newcommand{\ZZ}{\mathbb{Z}}
\newcommand{\CC}{\mathbb{C}}
\newcommand{\RR}{\mathbb{R}}
\newcommand{\PP}{\mathbb{P}}
\title{Local geometry of symplectic divisors with applications to contact torus bundles}
\author{Tian-Jun Li, Jie Min}
\begin{document}
\maketitle

\begin{abstract}
	In this note we study the contact geometry of symplectic divisors. We show the contact structure induced on the boundary of a divisor neighborhood is invariant under toric and interior blow-ups and blow-downs. We also construct an open book decomposition on the boundary of a concave divisor neighborhood and apply it to the study of universally tight contact structures of contact torus bundles.
\end{abstract}

\tableofcontents

\section{Introduction}

A \textbf{topological divisor} $ D $ refers to a connected configuration of finitely many closed embedded oriented smooth surfaces $ D=C_1\cup \dots \cup C_k $ in a smooth oriented 4-manifold $ X $ (possibly with boundary or non-compact).
In this paper, a topological divisor $ D $ is required to satisfy the following additional properties: $ D $ doesn't intersect the boundary of $ X $, no three $ C_i's $ intersect at the same point, and any intersection between two surfaces is positive and transversal. 
In a symplectic 4-manifold $(X,\omega)$ (possibly with boundary or non-compact), a \textbf{symplectic divisor} is a topological divisor $D$ embedded in $X$, with each component being a symplectic surface and having the positive orientation with respect to $\omega$.
Since we are interested in the germ of a symplectic divisor, $ X $ is sometimes omitted in the writing and $ (D,\omega ) $, or simply $ D $, is used to denote a symplectic divisor.

Given a divisor $ D=\cup_{i=1}^k C_i $ in $ (X,\omega ) $, the intersection matrix of $ D $ is a $ k\times k $ matrix $ Q_D=([C_i]\cdot [C_j]) $, where $ \cdot  $ is used for any of the pairings $ H_2(X;\mb{K})\times H_2(X;\mb{K}) $, $ H^2(X;\mb{K})\times H_2(X;\mb{K}) $ and $ H^2(X;\mb{K})\times H^2(X,\partial X;\mb{K}) $. Here $ \mb{K}$ could be either $\ZZ$ or $\RR  $ depending on the situation. We also denote by $b^+(D)$ the number of positive eigenvalues of $Q_D$.

Let $ (D=\cup_{i=1}^k,\omega) $ be a symplectic divisor. 
A symplectic divisor is called $ \omega $-orthogonal is any two components intersect $ \omega $-orthogonally. 
A closed regular neighborhood of $ D $ is called a plumbing of $ D $. A plumbing $ N_D $ of $ D $ is called a \textbf{concave plumbing} (resp. \textbf{convex plumbing}) if it is a strong symplectic cap (resp. filling) of its boundary $(-Y_D,\xi_D)$ (resp. $(Y_D,\xi_D)$). A concave plumbing is also called a \textbf{divisor cap} of its boundary. Let $ Q_D $ be the intersection matrix of $ D $ and $ a=(C_i\cdot [\omega])\in (\RR_+)^k $ be the area vector of $ D $. A symplectic divisor $ D $ is said be \textbf{concave} (resp. \textbf{convex}), if it satisfies positive (resp. negative) \textbf{GS criterion}, i.e. there exists $ z\in (\RR_+)^k $ (resp. $ (\RR_{\le 0})^k $) such that $ Q_D z=a $. A topological divisor $D=\cup C_i$ is called \textbf{non-negative} if $D\cdot C_i\ge 0$ for all $i$ and $D\cdot C_j\neq 0$ for some $j$. Similarly we can define a topological divisor to be \textbf{non-positive}, \textbf{positive} and \textbf{negative}.

For an $\omega$-symplectic divisor $D$, Gay-Stipsicz constructed in \cite{GaSt09} a convex plumbing for $D$ satisfying the negative GS criterion. This construction was extended to symplectic divisors satisfying the postive GS criterion in \cite{LiMa14-divisorcap}, where a concave plumbing is constructed for each such divisor. To summarize, we have the following theorem.
\begin{theorem}[\cite{GaSt09}, \cite{LiMa14-divisorcap}]\label{thm:divisorcap}
	Let $ D\subset (W,\omega ) $ be an $ \omega $-orthogonal symplectic divisor. Then $ D $ has a concave (resp. convex) plumbing if $ (D,\omega ) $ satisfies the positive (resp. negative) GS criterion.
\end{theorem}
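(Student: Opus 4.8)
The plan is to build the plumbing explicitly as a union of standard symplectic disk bundles, one over each component $C_i$, glued along split bidisks over the intersection points, and then to exhibit on a collar of its boundary a primitive $\beta$ of the symplectic form whose $\omega$-dual vector field $V$ (defined by $\iota_V\omega=\beta$, hence automatically Liouville wherever $d\beta=\omega$) is transverse to $\partial N_D$, pointing uniformly into $N_D$ in the concave case and uniformly out of it in the convex case. The GS criterion enters in two ways: the equation $Q_Dz=a$ is, via Lefschetz duality on $N_D$, exactly the condition that allows the symplectic form such a primitive on the collar, and the sign constraint on $z$ is exactly what makes $V$ point the right way.

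First I would assemble the model. For each $i$, let $\pi_i\colon N_i\to C_i$ be a closed disk bundle of Euler number $C_i\cdot C_i$ with small fibre radius, fibrewise radial coordinate $\rho_i$, and connection $1$-form $\theta_i$, carrying a standard symplectic form of the shape $\pi_i^*\sigma_i+\tfrac12\,d(\rho_i^2\theta_i)$ for an area form $\sigma_i$ on $C_i$. The $\omega$-orthogonality hypothesis is precisely what lets me arrange, near each transverse intersection $C_i\cap C_j$, that the forms on $N_i$ and $N_j$ agree simultaneously with the split form on a bidisk $D^2(\rho_i)\times D^2(\rho_j)$; gluing the $N_i$ along these bidisks produces a compact symplectic $4$-manifold $(N_D,\omega_{N_D})$ with boundary, whose divisor has intersection matrix $Q_D$ by construction and, after adjusting the $\sigma_i$, area vector $a$. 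A standard relative Moser argument then identifies the germ of $(N_D,\omega_{N_D})$ along the divisor with that of $(D,\omega)$, so it suffices to put a Liouville structure near $\partial N_D$ exhibiting it as a concave (resp. convex) boundary of the model.

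Now invoke the GS criterion and fix $z\in(\RR_+)^k$ (resp. $(\RR_{\le 0})^k$) with $Q_Dz=a$. By Lefschetz duality on $N_D$, the relation $Q_Dz=a$ says exactly that $[\omega_{N_D}]$ restricts to zero in $H^2(\partial N_D;\RR)$, so $\omega_{N_D}$ admits a primitive on the collar of $\partial N_D$ where every $\rho_i$ is near its maximal value; one can moreover take it of the form $\beta=\sum_i\tfrac12\,h_i(\rho_i)\theta_i$, matching on each intersection bidisk the standard radial primitive, with the profile functions $h_i$ and in particular their boundary values governed by $z$. Setting $\iota_V\omega_{N_D}=\beta$, a direct computation in these coordinates shows that $V$ is transverse to $\partial N_D$ and points uniformly into $N_D$ because $z\in(\RR_+)^k$ (resp. uniformly out because $z\in(\RR_{\le 0})^k$); one then smooths the corners of $\partial N_D$, the tori $\{\rho_i=\rho_j=\max\}$ over the double points, within the collar and compatibly with $V$, and verifies that $\beta|_{\partial N_D}$ is a positive (resp. negative) contact form for $\xi_D$. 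I expect the main obstacle to be exactly this global patching step: assembling the fibrewise primitives $\tfrac12\,h_i(\rho_i)\theta_i$ and the radial primitives on the bidisks into a single primitive on the collar whose dual vector field is transverse to all of $\partial N_D$ with one consistent co-orientation, and in particular matching the overlap constants across the double points correctly; it is here, rather than in any local computation, that the GS criterion is genuinely used.
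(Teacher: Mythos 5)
Your construction is essentially the paper's (the Gay--Stipsicz construction): the same decomposition into disk-bundle pieces over the $C_i$ and split bidisk/toric pieces over the double points, the same explicit fibrewise Liouville primitives whose constant terms are the entries of $z$ (the paper's $z_i'=-z_i/2\pi$ appearing in $V_i=W_i+(\frac{r}{2}+\frac{z_i'}{r})\partial_r$ and in the placement of the moment-polytope regions $R_e$ relative to the origin), and the same observation that the sign of $z$ decides whether the dual vector field points into or out of the plumbing along the boundary. The Lefschetz-duality reading of $Q_Dz=a$ is a correct cohomological gloss that the paper omits, and the global patching step you identify as the main obstacle is exactly what the paper's careful choices of $s_{i,e}$, $x_{i,e}$, $\epsilon$, $\delta$ and the interpolating function $g$ are there to accomplish.
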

We call this construction the GS construction and review it in Section \ref{section:GS}. Note that a different construction was presented in \cite{McL16-discrepancy}, which works in higher dimensions and does not require $\omega$-orthogonality.

The convex or concave plumbing depends on the symplectic divisor $(D,\omega)$ and other parameters, but the contact structure induced on the boundary depends only on the topological divisor $D$ (\cite{McL16-discrepancy}, \cite{LiMa14-divisorcap}). This motivates the notion of convex and concave topological divisors (see Section \ref{section:contact str}).
A much stronger uniqueness holds for convex divisor $D$, where the contact structure is called Milnor fillable. The Milnor fillable contact structure depends only on the oriented diffeomorphism type of $Y_D$ instead of the divisor $D$ (\cite{Caubel-Nemethi-Pampu2006}). In Section \ref{section:contact str} we formulate a suitable version of the following natural question.
\begin{question}
	Is there a similar unique contact sturcture on $-Y_D$ for concave $D$?
\end{question}
As a first step towards the question, we prove the contact structure $(-Y_D,\xi_D)$ is invariant under toric equivalence and interior blow-up/down of $D$ (Proposition \ref{prop:contact toric eq}) in the Appendix. Such invariance also plays an important role in the study of symplectic fillings of contact torus bundles in \cite{LiMaMi-logCYcontact}.

Furthermore, an open book decomposition was constructed on the contact boundary of the convex plumbing in \cite{GaSt09} for non-positive symplectic divisors. Later a Lefschetz fibration was constructed on the convex plumbing of a non-positive symplectic divisor in \cite{GaMa13-LF}. In Section \ref{section:openbook}, we extend the construction of \cite{GaSt09} to non-negative divisors.
\begin{prop}\label{construct-OBD}
	Let $D$ be a non-negative symplectic divisor and $(N_D,\omega)$ the concave plumbing constructed from the GS construction. Then there is an open book decomposition supporting the boundary $(-Y_D,\xi_D)$ of $(N_D,\omega)$. The page and monodromy of the open book decomposition can be read off directly from $D$.
	% Suppose the augmented diagram $ (\Gamma,a) $ satisfies the positive GS criterion and $ (P,\omega_P) $ be the concave plumbing associated to $ (\Gamma,a) $. If $ \Gamma $ is non-negative, then there exists an open book decomposition supporting $ (Y_\Gamma,\xi_\Gamma ) $ where $ Y_\Gamma =-\partial P $ and $ \xi_\Gamma  $ is the contact structure induced by the concave symplectic structure on $ P $.
\end{prop}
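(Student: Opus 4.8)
The plan is to re-run the open book construction of \cite{GaSt09} on the boundary of the GS concave plumbing, building the page and the monodromy out of local contributions of the components $C_i$ so that the final answer is read off from the weighted intersection graph of $D$. Recall from Section~\ref{section:GS} that $(N_D,\omega)$ is obtained by plumbing symplectic disk bundles over the components: writing $\Sigma_i^{\circ}$ for $C_i$ with one open disk removed around each of its intersection points (and one open disk removed if $k=1$), the boundary $(-Y_D,\xi_D)$ is a union of circle-bundle pieces $M_i\to\Sigma_i^{\circ}$, glued in pairs along boundary tori $T_{ij}$ at the intersection points by interchanging base and fibre directions, up to thin ``corner'' regions, each a piece of the $S^3$ bounding the local bidisk at the point $p\in C_i\cap C_j$. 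The GS construction also provides explicit contact forms on each piece, in which the $S^1$-fibre coordinate of $M_i$ and the corner angular coordinates appear linearly, with coefficients controlled by the GS vector $z\in(\RR_+)^k$ solving $Q_Dz=a$ and by the chosen cutoff functions.

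First I would fix the model open book on each piece $M_i$. Its binding is a union of $S^1$-fibres over generic interior points of $\Sigma_i^{\circ}$, the number over $M_i$ being determined by $g_i$, the valence of $C_i$, and the self-intersection $[C_i]\cdot[C_i]$; the page over $M_i$ is $\Sigma_i^{\circ}$ with a further open disk deleted around each of these points, and the monodromy contribution is a product of boundary-parallel Dehn twists whose multiplicities encode the Euler number of $M_i\to\Sigma_i^{\circ}$, i.e.\ encode $[C_i]\cdot[C_i]$. The hypothesis that $D$ is \textbf{non-negative}, so that $[C_i]\cdot[C_i]\ge 0$, enters precisely here: the relevant local models are the ``duals'' of those in \cite{GaSt09}, where the self-intersections are nonpositive, and one has to check that with this reversed sign (and on the cap side) the page is still a surface with the expected topology and the twists come out with the handedness needed for a positive open book, which may force additional binding fibres on the pieces $M_i$ with positive self-intersection. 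Near each intersection point one fits a standard local open book on the rounded corner region, interpolating between the page and binding data coming from $M_i$ and from $M_j$.

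Next I would glue. Adjacent pieces $M_i$ and $M_j$ meet along a torus $T_{ij}$ on whose two sides the page foliates $T_{ij}$ by parallel circles (the binding fibres being interior to the pieces), and the content of the gluing is that these two circle foliations, together with the corner model between them, must be compatible; since the fibre direction of one side is the base direction of the other, this is a linear condition on framings which one arranges to hold, using the explicit contact forms from Section~\ref{section:GS} together with $\omega$-orthogonality and the freedom in the cutoff functions and collar widths, after a contactomorphism isotopic to the identity on the collars if necessary. After gluing, the global page is the surface $\Sigma$ obtained by plumbing the punctured surfaces $\Sigma_i^{\circ}$ (now carrying their additional punctures) along one band at each intersection point, and the global monodromy $\phi$ is the product over $i$ of the boundary Dehn twists described above; both $\Sigma$ and $\phi$ depend only on the genera $g_i$, the self-intersections $[C_i]\cdot[C_i]$, and the intersection pattern of $D$, which is the final sentence of the Proposition. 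It remains to confirm that the open book supports $(-Y_D,\xi_D)$: on the reassembled contact form $\alpha$ one checks $\alpha>0$ along the binding and $d\alpha>0$ on the interior of every page, which reduces once more to the local normal forms.

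The step I expect to be the main obstacle is this gluing-and-supporting analysis, together with keeping the orientation conventions straight: we are working on $-Y_D$, the boundary of a symplectic \emph{cap}, and it must be verified that the natural open book coming out of the construction is positive and supports $\xi_D$ rather than its conjugate $\overline{\xi_D}$ --- a point where the nonpositive-to-nonnegative switch is genuinely delicate, because reversing orientations naively would flip the contact structure. Organizing the bookkeeping when a component $C_i$ meets several others (the cyclic order of the bands and binding fibres around $\Sigma_i^{\circ}$, and the framings at the tori $T_{ij}$) is a secondary technical burden. I expect both to be handled by the local-model technique of \cite{GaSt09} applied now to disk bundles of nonnegative Euler number; the toric and interior blow-up invariance of $(-Y_D,\xi_D)$ recorded in Proposition~\ref{prop:contact toric eq} gives an independent consistency check on examples.
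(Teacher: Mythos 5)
Your overall strategy is the one the paper actually follows---adapt the Gay--Stipsicz local building blocks to the concave side, distribute binding circles according to the self-intersections, and glue along the plumbing tori---but your description of the monodromy has a genuine gap: you assert that the global monodromy is a product of boundary-parallel Dehn twists only. That is not enough to recover $-Y_D$. In the GS picture each edge piece $f_e^{-1}(l)$ carries the fibration $q_1+q_2$, and when the vertex fibrations $\theta$ are interpolated into it across the two gluing collars, each plumbing point contributes a \emph{negative} Dehn twist along the circle around the corresponding connect-sum neck. The correct monodromy is $(\tau_{\gamma_1}\cdots\tau_{\gamma_l})^{-1}(\tau_{\delta_1}\cdots\tau_{\delta_q})$, where the $\gamma$'s are the $l=|E|$ neck circles and the $\delta$'s are the $q=\sum_i(s_i+d_i)$ boundary-parallel curves; dropping the factor $(\tau_{\gamma_1}\cdots\tau_{\gamma_l})^{-1}$ changes the total space of the open book, and it would also break the application in Section \ref{application}, where each edge is recorded as the letter $a^{-1}$ precisely because of this neck twist. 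Relatedly, your count of binding components as ``determined by $g_i$, the valence, and the self-intersection'' is off: the genus affects the page topology but not the binding, which has exactly $s_i+d_i$ components attributable to the vertex $v_i$ (distributed as $s_{i,e}+1$ circles in the collar between $v_i$ and each incident edge $e$).

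The orientation-reversal issue you flag as the main obstacle is real, but your proposal only announces that ``the local-model technique'' will handle it; the paper resolves it with a concrete mirror building block, namely the open book $\mathbf{ob}^-_m$ on $M^-=-[0,1]\times S^1\times S^1$ interpolating from $\theta$ to $\theta-m\alpha$ with $m\ge 0$ binding circles, oriented so that $-\partial_\theta$ is positively tangent to the binding and positively transverse to the pages. Non-negativity enters exactly as you guessed but in a sharper form: one needs a splitting $s_i=\sum_e s_{i,e}$ with every $q_{i,e}=s_{i,e}+1\ge 0$, which exists precisely when $s_i+d_i\ge 0$. Compatibility with $\xi_D$ is then verified by noting that the Reeb field is a negative multiple of $\partial_\theta$ on the vertex pieces and of $b_1\partial_{q_1}+b_2\partial_{q_2}$ with $b_1,b_2>0$ on the edge pieces, hence positively transverse to pages and tangent to bindings on $-Y_D$. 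With the neck twists restored and the $\mathbf{ob}^-_m$ lemma supplied, your outline matches the paper's proof.
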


As an application of the open book decomposition, we investigate the universal tightness of some contact torus bundles. 
Let $X$ be a smooth rational surface and $D\subset X$ an effective reduced anti-canonical divisor. Such pair $(X,D)$ is called an anti-canonical pair and is related to Looijenga's conjecture on dual cusp singularities (\cite{Lo81}), which was proved in \cite{GrHaKe11} and later in \cite{En}.
Symplectic log Calabi-Yau pairs were then introduced in \cite{LiMa16-deformation} as a symplectic analogue of anti-canonical pairs. Enumerative aspects of symplectic log Calabi-Yau pairs and relations to toric actions were also studied in \cite{LiMiNi-toric}.

We call a topological divisor $ D $ consisting of a cycle of spheres a \textbf{circular spherical divisor}. A symplectic circular spherical divisor can be seen as a local version of a symplectic log Calabi-Yau pair as it doesn't require a closed ambient symplectic manifold.
It is well-known that the boundary of a plumbing of a cycle of spheres is a topological torus bundle (\cite{Ne81-calculus}). By Proposition 5.10 of \cite{LiMaMi-logCYcontact}, when $b^+(D)\ge 1$, $D$ admits a concave plumbing and its boundary $(-Y_D,\xi_D)$ is a contact torus bundle.

Golla and Lisca investigated a large family of such contact torus bundles in \cite{GoLi14}, determined their Stein fillability and studied the topology of Stein fillings. Then for all circular spherical divisors with $b^+(D)\ge 1$, the Stein fillability/non-fillability was determined and all minimal symplectic fillings were shown to have a unique rational homology type, by Mak and the authors (\cite{LiMaMi-logCYcontact}).

%For a sequence $ \vec{d}=(d_1,\dots,d_k) $, we define \[ A(\vec{d})=\begin{pmatrix}
%	d_k & 1\\ -1 & 0
%\end{pmatrix}\dots \begin{pmatrix}
%	d_1 & 1\\ -1 & 0
%\end{pmatrix}\in SL(2,\ZZ).
%\]
% Denote by $ T_A $ an oriented torus bundle over $ S^1 $ with monodromy $ A\in SL(2;\ZZ) $. A torus bundle $ T_A $ is called elliptic if $ |tr A|<2 $, parabolic if $ |trA|=2 $ and hyperbolic if $ |trA|>2 $. 

% As an application, we investigate the universal tightness of these contact structures. 
With their understanding of Stein fillings, Golla and Lisca showed in \cite{GoLi14} that a subfamily of the contact torus bundles they considered are universally tight. This led them to formulate the following conjecture.
\begin{conjecture}[\cite{GoLi14}]\label{conj}
	Suppose $ D $ is a circular spherical divisor with $ b^+(D)=1 $ and $ (-Y_D,\xi_D) $ symplectic fillable, then $ (-Y_D,\xi_D) $ is universally tight.
\end{conjecture}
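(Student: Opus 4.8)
The plan is to use the open book of Proposition~\ref{construct-OBD} to put $(-Y_D,\xi_D)$ into an explicit normal form as a contact torus bundle, and then to locate it inside the Giroux--Honda classification of tight contact structures on torus bundles. When $b^+(D)\ge 1$, the manifold $-Y_D$ is a torus bundle $M_A$ over $S^1$, whose monodromy $A\in SL(2,\ZZ)$ is computed from the cyclic self-intersection sequence $(a_1,\dots,a_k)$ of the cycle $D$ by Neumann's plumbing calculus~\cite{Ne81-calculus}. By Proposition~\ref{prop:contact toric eq} the contact structure $\xi_D$ is invariant under toric equivalence, so I would first reduce $(a_1,\dots,a_k)$ to a normal form (and, if needed, to a non-negative representative, so that Proposition~\ref{construct-OBD} applies); combined with $b^+(D)=1$ and symplectic fillability --- the fillable circular spherical divisors being classified in~\cite{LiMaMi-logCYcontact}, extending~\cite{GoLi14} --- this leaves only a short list of families, with $A$ hyperbolic of positive trace, parabolic, or of finite order.

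For each normal form, I would use that the page and monodromy $(P,\phi)$ of Proposition~\ref{construct-OBD} are read directly off $D$, with $\phi$ a product of right-handed Dehn twists, to exhibit a convex torus isotopic to a fiber; tracking the dividing curves and the relative Euler class across the bundle then gives a basic-slice decomposition of $\xi_D$, and the sign of each slice can be extracted from the Dehn-twist word. The crux is to show that all of these signs agree, i.e.\ that $\xi_D$ is the monotone, minimally twisting contact torus-bundle structure with vanishing Giroux torsion. Granting this, the Giroux--Honda classification identifies $\xi_D$ with one of the universally tight contact structures on $M_A$ (two of them in the hyperbolic case, the analogous finite list in the parabolic case, and the canonical transverse structure in the finite-order case, where $M_A$ is small Seifert fibered and one invokes the classification there), which is exactly the assertion.

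A more hands-on alternative: since an overtwisted disk in the universal cover is compact it descends to a finite cover, so $\xi_D$ is universally tight iff every finite cover of $(-Y_D,\xi_D)$ is tight; and a cofinal family of finite covers of $-Y_D$ is given by unrolling the cycle $D$ to its $n$-fold cyclic cover $D^{(n)}$ --- again a circular spherical divisor, with $b^+(D^{(n)})\ge1$ --- together with fiberwise covers. If one shows that fillability of $\xi_D$ is inherited by all of these (for the cyclic covers this is the statement that the fillability criterion of~\cite{LiMaMi-logCYcontact} is invariant under repeating the cyclic sequence, and the open book of Proposition~\ref{construct-OBD} for $D^{(n)}$ is literally the $n$-fold unrolled open book, so the covering maps are contact), then every finite cover is symplectically fillable, hence tight by Gromov--Eliashberg, and we are done.

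The main obstacle is the middle step, in either route. In the first it is the translation of the open-book monodromy of Proposition~\ref{construct-OBD} into Honda's convex-surface bookkeeping on the torus bundle --- determining the dividing slopes and the signs of the basic slices --- and then checking, over every normal form of a fillable circular spherical divisor with $b^+=1$, that these signs are consistent. In the second it is proving that fillability passes to all finite covers (in particular to the fiberwise ones, and ruling out Giroux torsion produced by passing to a cover in the Sol case). In both approaches the finite-order (elliptic) monodromy case needs separate treatment, since there the bundle is Seifert fibered rather than carrying Sol or Nil geometry; one expects the open book there to exhibit $\xi_D$ as the canonical transverse contact structure, which is universally tight.
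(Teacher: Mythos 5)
You are attempting to prove a statement that the paper itself does not prove: Conjecture~\ref{conj} is stated as an open conjecture of Golla--Lisca, and the paper only establishes the partial result Theorem~\ref{thm:universally-tight}, which (i) requires $D$ to be toric equivalent to a \emph{non-negative} divisor (a genuine hypothesis --- not every circular spherical divisor with $b^+=1$ admits such a representative, which is exactly why your step ``if needed, to a non-negative representative, so that Proposition~\ref{construct-OBD} applies'' is a gap rather than a reduction), and (ii) explicitly excludes the positive parabolic case $A=\bigl(\begin{smallmatrix}1&n\\0&1\end{smallmatrix}\bigr)$, $n>0$, which remains open. Any purported proof of the full conjecture must confront both of these, and your proposal addresses neither.

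Beyond that, the crux of your first route points in the wrong direction. You propose to show that $\xi_D$ is \emph{minimally twisting} with all basic-slice signs agreeing. What the paper's computation actually yields (via the Van Horn-Morris word $w(D)=b^{-2-s_1}a^{-1}\cdots b^{-2-s_l}a^{-1}$ read off from the open book of Proposition~\ref{construct-OBD}) is the opposite: $\mathfrak{c}_w\ge\pi$, hence $\beta_{S^1}\ge\pi$, i.e.\ the structure is \emph{non-minimally twisting} in the $S^1$-direction, and universal tightness then follows from Honda's Proposition~\ref{honda}, which says that non-minimally twisting tight structures are universally tight except precisely for monodromy $a^n$, $n>1$. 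So the sign-consistency you want to verify is not what holds for these structures, and Honda's dichotomy is what forces the parabolic exception you do not treat. Your second route founders on its middle step as you yourself note: that symplectic fillability passes to all finite covers (including the fiberwise ones, which do not correspond to unrolled circular divisors in any evident way) is not known and is essentially equivalent in difficulty to the universal tightness one is trying to prove; Gromov--Eliashberg only converts that unproven input into the conclusion. If you restrict to the non-negative case and replace your minimal-twisting claim by the $\beta_{S^1}\ge\pi$ computation, you recover the paper's Theorem~\ref{thm:universally-tight}, but not the conjecture.
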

This was confirmed for divisors with nonsingular intersection matrices by Ding-Li (\cite{DiLi18-torusbundle}). Both the results of Golla-Lisca and Ding-Li come from an extrinsic point of view and relies on understanding the symplectic fillings of virtually overtwisted contact torus bundles. 

% We first construct an open book decomposition supporting the canonical contact structure associated to a concave circular spherical divisor, when the graph of $ D $ is non-negative. 
We approach this conjecture from an intrinsic angle, based on the Giroux correspondence between contact structures and open book decompositions. 
Via the open book decomposition constructed in Proposition \ref{construct-OBD}, we combine the results of Honda (\cite{honda2000}) and Van-Horn-Morris (\cite{VHM-thesis}) to prove the following result in the direction of the above conjecture.
\begin{thm}\label{thm:universally-tight}
	Let $ D $ be a circular spherical divisor toric equivalent to a non-negative one, then $ (-Y_D,\xi_D) $ is universally tight, except possibly when $ -Y_D $ is a parabolic torus bundle with monodromy $ \begin{pmatrix}
		1 & n\\ 0 &1
		\end{pmatrix}, n>0 $.
\end{thm}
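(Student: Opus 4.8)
The plan is to transport the question onto an explicit open book with once‑punctured torus page and then extract universal tightness from the combined classifications of Honda \cite{honda2000} and Van Horn‑Morris \cite{VHM-thesis}, after a case split on the monodromy of the torus bundle $-Y_D$. By Proposition~\ref{prop:contact toric eq} the contact manifold $(-Y_D,\xi_D)$ depends only on the toric‑equivalence class of $D$, so I may assume $D=C_1\cup\cdots\cup C_n$ is itself a non‑negative circular spherical divisor; with self‑intersection sequence $(a_1,\dots,a_n)$ this means $D\cdot C_i=a_i+2\ge 0$ for all $i$ and $>0$ for some $i$, i.e. $a_i\ge -2$ for all $i$ and $a_j\ge -1$ for some $j$. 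Proposition~\ref{construct-OBD}, applied to the GS concave plumbing $(N_D,\omega)$, produces an open book supporting $(-Y_D,\xi_D)$ whose page and monodromy are determined explicitly by $D$; I will check that for a cycle of spheres the page is a once‑punctured torus $\Sigma_{1,1}$ and the monodromy $\phi\in\mathrm{MCG}(\Sigma_{1,1})$ is an explicit word in the two standard Dehn twists $\tau_\alpha,\tau_\beta$ (along curves meeting once), with exponents dictated by the $a_i$, and that non‑negativity of $D$ is exactly what places $\phi$ in the family treated below.

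Next I bring in the underlying torus bundle. Since $D$ is a cycle of spheres, $-Y_D$ fibers over $S^1$ with torus fiber, and by the standard plumbing calculus its monodromy $h\in SL(2,\mathbb{Z})$ is the product of the elementary matrices attached to the $a_i$; whether $h$ is elliptic ($|\operatorname{tr}h|\le 1$), parabolic ($|\operatorname{tr}h|=2$), or hyperbolic ($|\operatorname{tr}h|\ge 3$) is read off from $(a_1,\dots,a_n)$ and is mirrored by the dynamics of $\phi$. Van Horn‑Morris's thesis describes, for each torus bundle, the once‑punctured‑torus open books that carry its (weakly or strongly) fillable contact structures together with the corresponding monodromies; matching $(\Sigma_{1,1},\phi)$ against that list identifies the contact structure $\xi_D$ precisely. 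It then remains to feed this identification into Honda's classification of tight contact structures on torus bundles.

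I would then argue case by case. In the hyperbolic case Honda's list has finitely many tight contact structures, among which a distinguished pair — exactly the normal form that our open book $(\Sigma_{1,1},\phi)$ realizes — is universally tight, so $\xi_D$ is universally tight. In the elliptic case $-Y_D$ is one of finitely many small Seifert fibered manifolds, and the tight contact structure in question is universally tight (in those manifolds it is either the unique tight structure or one of the evidently universally tight ones). In the parabolic case $h$ is conjugate up to sign to $I$ or to $\begin{pmatrix}1&m\\0&1\end{pmatrix}$, and Honda's classification yields universal tightness for $h=\pm I$, for $h=\begin{pmatrix}-1&m\\0&-1\end{pmatrix}$, and for $h=\begin{pmatrix}1&m\\0&1\end{pmatrix}$ with $m\le 0$ — where the relevant tight structures are either unique or form the $T^3$‑type family, all universally tight — leaving only $h=\begin{pmatrix}1&m\\0&1\end{pmatrix}$ with $m>0$: this is precisely the excluded case, since there the open book of Proposition~\ref{construct-OBD} no longer has the right‑veering normal form needed and the classification does not certify $\xi_D$ as universally tight. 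Assembling the cases proves the theorem.

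The main obstacle is the explicit monodromy computation together with the matching in the previous two steps: I must carefully unwind the GS construction and the open book construction of Section~\ref{section:openbook} to pin down $\phi$ as a specific word in $\tau_\alpha,\tau_\beta$, and then reconcile three sign/orientation conventions — those of the symplectic divisor, of the open book, and of the torus bundle monodromy as used by Honda and by Van Horn‑Morris — so that $(\Sigma_{1,1},\phi)$ lands on the universally tight side of the classifications. This is at bottom a continued‑fraction / product‑of‑$2\times 2$‑matrices bookkeeping, but the boundary cases ($|\operatorname{tr}h|=2$, and within the parabolic family the two signs of $m$) are delicate, and it is exactly one of these, $h=\begin{pmatrix}1&m\\0&1\end{pmatrix}$ with $m>0$, that the argument is unable to reach.
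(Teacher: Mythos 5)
Your overall scaffolding (reduce to a non-negative representative via Proposition \ref{prop:contact toric eq}, read off an open book from Proposition \ref{construct-OBD}, translate into Van Horn--Morris's word calculus, and invoke Honda) matches the paper, but the engine of the argument is missing, and the case-by-case matching you propose in its place would not close. The paper does not identify $\xi_D$ inside Honda's classification by inspecting the monodromy type of $-Y_D$: for \emph{every} monodromy $A$ there are infinitely many tight contact structures (Proposition \ref{honda} begins by asserting this), and in the minimally twisting regime many of them are virtually overtwisted, so knowing only that the open book comes from a word in $\{a^{-1},b^{-1}\}$ (which by Proposition \ref{weakly-fillable} gives weak fillability, hence tightness) does not determine whether $\xi_D$ is universally tight. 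The decisive step in the paper is quantitative: writing $w(D)=b^{-2-s_1}a^{-1}\cdots b^{-2-s_l}a^{-1}$, cyclically permuting so that some $s_l\ge 0$, and computing that the suffix $b^{-1}a^{-n}b^{-1}$ ($n\ge 2$) or $b^{-1}a^{-l}b^{-m}a^{-1}b^{-1}$ ($l\ge 1$, $m\ge 0$) rotates $(1,0)^T$ by at least $\pi$, while the remaining prefix $w'$, rewritten as a product of $(aba)^{-1}$ and $a^{\pm 1}$, cannot rotate the vector back into the upper half plane. This yields $\mathfrak{c}_w\ge\pi$, hence $\beta_{S^1}\ge\pi$, i.e.\ non-minimal twisting; only then does Honda's Proposition \ref{honda} apply, which certifies universal tightness except for monodromy $\begin{pmatrix}1&n\\0&1\end{pmatrix}$, $n>1$. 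That is also the true source of the excluded case --- virtually overtwisted non-minimally-twisting structures exist exactly for positive parabolic monodromy --- not a failure of a ``right-veering normal form'' as you suggest. Without some substitute for the twisting computation, your hyperbolic and elliptic cases are assertions, not proofs.

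Two further genuine inaccuracies. First, the page of the open book for a length-$l$ cycle of spheres is a torus with $\sum_i(s_i+2)$ boundary components, not a once-punctured torus, and the monodromy is the product of boundary-parallel positive twists and negative twists along the connect-sum necks encoded by $w(D)$, not a word in two twists along curves meeting once. Second, the reduction step needs care: after passing to a toric minimal representative you may have all $s_i<0$; the paper uses $b^+(Q_D)\ge 1$ together with Lemma 2.4 of \cite{LiMaMi-logCYcontact} to reduce to the case where some $s_i\ge 0$, plus the sporadic divisors $(-1,-1)$ and $(-1,-2)$, which are handled separately by Proposition 4.1 of \cite{GoLi14}. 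Your proposal does not account for these degenerate cases.
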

Because our approach is purely 3-dimensional in nature, our result is stronger than Conjecture \ref{conj} in the sense that we don't require $ D $ to be embedded in a rational surface. In fact, most contact structures we considered are not symplectic fillable, and thus cannot be studied by extrinsic methods. Also the circular spherical divisor $D$ we considered can have $b^+(D)\ge 2$, compared to $b^+(D)=1$ in Conjecture \ref{conj}.

%The rest of this note is organized as follows. In Section 2, we recall the construction of divisor neighborhoods and present a construction of open book decompositions on the boundaries of such divisor neighborhoods. In Section 3, we survey new results on contact aspects of circular spherical divisors. In particular, Section 3.1 is a
%
%introduce the concept of maximal divisor and derive the desired lower bound. We use Donaldson divisors to give a combinatorial formula for the lower bound in terms of the combinatorial data of the divisor and compute many exmaples. Section 3 is devoted to the construction of open book decompositions on contact boundaries of concave divisors. We first review the construction of divisor neighborhood in \cite{GS2009} and \cite{divisorcap}. Then we extend the construction of Gay and Stipsicz to the concave case. Finally, in Section 4 we make use of the open book decomposition to prove Theorem \ref{universally-tight}.

\textbf{Acknowledgments}: The authors are grateful to Cheuk Yu Mak for useful discussions. Both authors are supported by NSF grant 1611680. 

\section{Local geometry of symplectic divisors}

\subsection{GS construction of divisor neighborhood}\label{section:GS}

We briefly review the construction of divisor neighborhood in \cite{GaSt09} and \cite{LiMa14-divisorcap}, i.e. the proof of Theorem \ref{thm:divisorcap}. The invariance of contact structure under blow-ups (Proposition \ref{prop:contact toric eq}) and the construction of open book decompositions (Proposition \ref{construct-OBD}) are based on this construction.

% We briefly review the proof of Theorem \ref{thm:divisorcap} as in \cite{GaSt09} and \cite{LiMa14-divisorcap}. 

For each topological divisor $ D $, we can associate a decorated graph $ \Gamma=(V,E,g=(g_i),s=(s_i)) $ with each vertex $ v_i $ representing the embedded symplectic surface $ C_i $ and each edge connecting $ v_i,v_j $ corresponds to an intersection between $ C_i $ and $ C_j $. Each vertex $ v_i $ is weighted by the genus $ g_i=g(C_i) $ and self-intersection $ s_i=[C_i]^2 $. If $ (D,\omega) $ is a symplectic divisor, we can associate an augmented graph $ (\Gamma, a) $ by adding the area vector $ a=([\omega]\cdot [C_i])_{i=1}^k $.

For an augmented graph $ (\Gamma,a) $ and a vector $ z $ such that $ Q_\Gamma z=a $. Let $ z'=-\frac{1}{2\pi} z $ and fix a small $ \epsilon>0 $. For each vertex $ v_i $ and each edge $ e $ connecting to $ v_i $, we choose an integer $ s_{i,e} $ such that $ \sum_{e\in\mc{E}(v_i)} s_{i,e}=s_i $, where $ \mc{E}(v_i) $ denotes the set of edges $ e $ connecting to $ v_i $. Also, set $ x_{i,e}=-s_{i,e}z_i' - z_{j}' $, where $ v_j $ is the other vertex connected by $ e $.

Consider the first quadrant $ P=[0,\infty)^2\subset \RR^2 $ and for some fixed $ \gamma $ and $ \delta $, let $ g:P\to [0,\infty) $ be a smooth function with level sets like in the following figure. So $ g(x,y)=x $ when $ y-x>\gamma  $, $ g(x,y)=y $ when $ y-x<-\gamma  $ and $ g $ is symmetric with respect to the line $ y=x $.

\begin{figure}[h]
	\centering
	\includegraphics[scale=0.7]{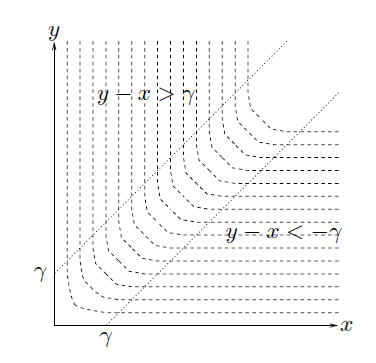}
	\caption{Contour of function $ g(x,y) $}
	\label{figure:g(x,y)}
\end{figure}

The constants $ \gamma  $ and $ \delta  $ are chosen to be small enough so that for each vertex $ v_i $ and each edge $ e $ incident to $ v_i $, the line passing through $ (0,\epsilon ) $ with tangent vector $ (1,-s_{i,e}) $ should intersect $ g^{-1}(\delta ) $ in the region $ y-x>\gamma  $. By symmetry, we also have the line passing through $ (\epsilon,0) $ with tangent vector $ (-s_{i,e},1) $ intersects $ g^{-1}(\delta) $ in the region $ y-x<-\gamma $.

For edge $ e $ connecting vertices $ v_i $ and $ v_j $, we can construct a local model $ (X_e,C_e,\omega_e,V_e,f_e) $ as follows. Let $ \mu:S^2\times S^2\to [z_i',z_i'+1]\times [z_j',z_j'+1] $ be the moment map of $ S^2\times S^2 $ onto its  image. We set $ p_1 $, $ p_2 $ be the coordinates for $ [z_i',z_i'+1]$, $ [z_j',z_j'+1] $ and set $ q_1,q_2\in \RR/2\pi\ZZ $ to be the corresponding fiber coordinates. Then $ \omega =dp_1\wedge dq_1 + dp_2\wedge dq_2 $ is the symplectic form on the preimage of the interior of the moment image. Let $ g_e(x,y)=g(x-z_i',y-z_j') $ and let $ R_e $ be the open subset of $ g_e^{-1}[0,\delta) $ between the line passing through $ (z_i',z_j'+2\epsilon) $ with tangent vector $ (1,-s_{i,e}) $ and the line passing through $ (z_j', z_i'+2\epsilon) $ with tangent vector $ (-s_{j,e},1) $. Let $ (X_e,\omega_e ) $ be the symplectic manifold given as the toric preimage $ \mu^{-1}(R_e) $. Let $ C_e=\mu_e^{-1}(\partial R_e) $, $ f_e=g_e\circ \mu_e $ and $ V_e $ be the Liouville vector field obtained by lifting the radial vector field $ p_1\partial_{p_1}+p_2\partial_{p_2} $ in $ \RR^2 $.

\begin{figure}[h] %GS construction
	\centering
	\begin{tikzpicture}[scale=1]
	% Draw axes
	\draw [<->,thick] (0,4.5) node (yaxis) [above] {$y$}
	|- (6,0) node (xaxis) [right] {$x$};
	% Draw two intersecting lines
	\draw (1.5,0.9) coordinate (a_1) -- (4.5,0.9) coordinate (a_2);
	\draw (1.5,0.9) coordinate (b_1) -- (1.5,3.9) coordinate (b_2);
	\draw (3.2,1.8) coordinate (d_1)-- (5.4,1.8) coordinate (d_2);   
	\draw (2.4,2.6) coordinate (e_1)-- (2.4,3) coordinate (e_2); 
	\draw (d_1) to [bend left] (e_1);
	\draw (1.7,3.3) node [right] {$R_{i,e} $};
	\draw (2.4,3) coordinate (j_1)-- (2.4,4.3) coordinate (j_2);

	\draw (1.5,3.9) coordinate (f_1)-- (2.4,4.3) coordinate (f_2); 
	\draw (1.5,2.4) coordinate (g_1)-- (2.4,2.8) coordinate (g_2);
	\draw (3,0.9) coordinate (h_1)-- (3.9,1.8) coordinate (h_2);
	\draw (3.8,1.3) node [right] {$R_{j,e} $};
	\draw (4.5,0.9) coordinate (i_1)-- (5.4,1.8) coordinate (i_2); 
	% Calculate the intersection of the lines a_1 -- a_2 and b_1 -- b_2
	% and store the coordinate in c.
	\coordinate (c) at (intersection of a_1--a_2 and b_1--b_2);
	% Draw lines indicating intersection with y and x axis. Here we use
	% the perpendicular coordinate system
	\draw[dashed] (yaxis |- c) node[left] {$z_{j}'$}
	-| (xaxis -| c) node[below] {$z_{i}'$};
	\draw[dashed] (1.5,2.4) -- (0,2.4) node[left] {$z_{j}'+\epsilon$};
	\draw[dashed] (1.5,3.9) -- (0,3.9) node[left] {$z_{j}'+2\epsilon$};
	\draw[dashed] (3,0.9) -- (3,0) node[below] {$z_{i}'+\epsilon$};
	\draw[dashed] (4.5,0.9) -- (4.5,0) node[below] {$z_{i}'+2\epsilon$};        
	
	\end{tikzpicture}
	
	\caption{Region $ R_e $}
	\label{fig GS construction}
\end{figure}
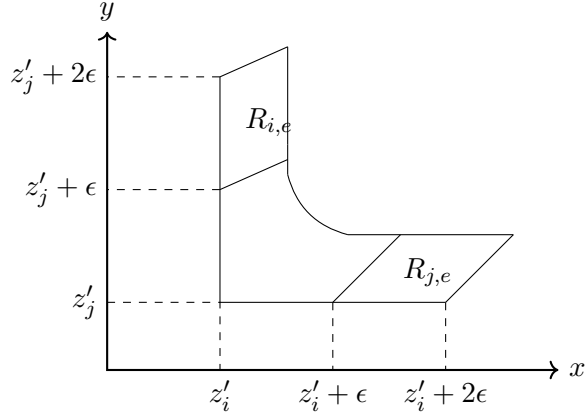

Then for each vertex $ v_i $ with valence $ d_i $, we may associate a 5-tuple $ (X_i,C_i,\omega_i,V_i,f_i) $ as follows. Let $ g_i $ be the genus of $ v_i $ and $ \Sigma_i  $ be a compact Riemann surface with genus $ g_i $ and $ d_i $ boundary components $ \partial_e\Sigma_i  $ corresponding to each edge $ e $ connected to $ v_i $. We can find a symplectic form $ \beta_i  $ and a Liouville vector field $ W_i $ on $ \Sigma_i $ such that there exists a collar neighborhood of $ \partial_e\Sigma_i  $ parametrized as $ (x_{i,e}-2\epsilon,x_{i,e}-\epsilon]\times S^1 $ on which $ \beta_i=dt\wedge d\alpha  $ and $ W_i=t\partial_t  $. Then we define $ X_i=\Sigma_i\times D^2(\sqrt{2\delta}) $ and $ \omega_i=\beta_i + rdr\wedge d\theta $, where $ D^2(\rho ) $ is the disk of radius $ \rho  $ and $ (r,\theta ) $ is the standard polar coordinate on the disk. We define $ f_i=\dfrac{r^2}{2} $, Liouville vector field $ V_i=W_i+(\dfrac{r}{2}+\dfrac{z_i'}{r})\partial_r $ and $ C_i=\Sigma_i-\partial \Sigma_i $.

Finally, the symplectic neighborhood $ (X,C,\omega,V,f) $ is constructed by gluing the local models together appropriately. Let $ R_{i,e} $ be the parallelogram in $ R_e $ cut out by the two lines with tangent vector $ (1,-s_{i,e}) $ passing through $ (z_i',z_j'+\epsilon) $ and $ (z_i',z_j'+2\epsilon) $ respectively. Similarly $ R_{j,e} $ is cut out by the two lines with tangent vector $ (-s_{j,e},1) $ passing through $ (z_j',z_i'+\epsilon) $ and $ (z_j',z_i'+2\epsilon) $ respectively. $ X_i $ can be glued to $ X_e $ by identifying $ \mu_e^{-1}(R_{i,e}) $ with $ (x_{i,e}-2\epsilon,x_{i,e}-\epsilon)\times S^1\times D^2(\sqrt{2\delta}) $. It's easy to check that symplectic forms, functions and Liouville vector fields all match accordingly.

It's easy to see that when $ (D,\omega) $ satisfies negative GS criterion, i.e. $ z\in (\RR_-)^k $, the Liouville vector field $ V $ points outward along the boundary. So the glued 5-tuple $ (X,C,\omega,V,f) $ gives the desired convex neighborhood. And when $ (D,\omega) $ satisfies positive GS criterion, we have $ z\in (\RR_+)^k $. Then we can choose $ t $ small enough such that $ V $ is inward pointing along the boundary of $ f^{-1}([0,t]) $, which gives a concave neighborhood. We would call this neighborhood the convex or concave plumbing of $ D $ and denote it by $ (N_D,\omega ) $.

In summary, given a symplectic divisor $(D,\omega)$ (or equivalently an augmented graph $ (\Gamma,a) $), a vector $ z $ satisfying positive/negative GS criterion and choices of parameters $ \epsilon ,  \delta ,t \in \RR_+ $, $ \{s_{v,e}\in \ZZ |\sum_{e\in \mc{E}(v)} s_{v,e}=s_v \} $, $ g:[0,\infty )^2\to [0,\infty )  $, the above construction gives a symplectic plumbing $ (N_D,\omega) $ with Liouville vector field $ V $ along the boundary.

Although the statement of Theorem \ref{thm:divisorcap} concerns an ambient symplectic manifold $(W,\omega)$, it actually only depends on the combinatorial data $(\Gamma,a)$. Suppose $ D $ is only a topological divisor with intersection matrix $ Q_D $ such that there exists $ z,a $ satisfying the positive (resp. negative) GS criterion $ Q_D z=a $. Then the GS construction actually constructs a compact concave (resp. convex) symplectic manifold $ (N_D,\omega_z) $ such that $ D $ is $ \omega_z- $orthogonal symplectic divisor in $ N_D $ and $ a $ is the $ \omega_z- $area vector of $ D $.

\subsection{Topological divisor and contact structure}\label{section:contact str}
Let $(N_D,\omega)$ be a symplectic plumbing of $D$ and $Y_D=\partial N_D$ be the oriented boundary 3-manifold of the plumbing $N_D$.
The Liouville vector field $V$ constructed above induces a contact structure $\xi_D=\ker (\alpha)$ on this boundary, where $\alpha=\iota_V\omega$. Note that when $N_D $ is convex (resp. concave), $\xi_D$ is a positive contact structure (i.e. $\alpha\wedge d\alpha>0$) on the oriented manifold $Y_D$ (resp. $-Y_D$).

The following uniqueness result implies that the symplectic structure $ \omega $ may vary but the induced contact structure on the boundary only depends on the topological divisor $ D $.
%This $ (P(D),\omega_z) $ is a divisor cap of $ (-Y_D,\xi_D ) $ regardless of $ z $ by Proposition \ref{prop:unique-contact}.

\begin{prop}[\cite{LiMa14-divisorcap}, cf. \cite{McL16-discrepancy}]\label{prop:unique-contact}
	Suppose $ D $ is an $ \omega- $orthogonal symplectic divisor which satisfies the positive/negative GS criterion. Then the contact structures induced on the boundary are contactomorphic, independent of choices made in the construction and independent of the symplectic structure $ \omega  $, as long as $ (D,\omega ) $ satisfies positive/negative GS criterion.
	
	Moreover, if $ D $ arises from resolving an isolated normal surface singularity, then the contact structure induced by the negative GS criterion is contactomorphic to the contact structure induced by the complex structure.
\end{prop}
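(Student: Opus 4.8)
The plan is to argue that the GS construction, viewed as a parametrized family, is connected through divisor neighborhoods of a fixed topological type, and then invoke a Gray-type stability/Moser argument to conclude contactomorphism on the boundary. First I would fix the topological divisor $D$ and consider the space of admissible data $(\omega, z, \epsilon, \delta, t, \{s_{v,e}\}, g)$ entering the construction. For two choices of such data, I would produce a path in this parameter space: the area vectors $a$ lie in an open convex cone (positivity of $[\omega]\cdot[C_i]$), the solution set $\{z : Q_D z = a\}$ to the positive (resp. negative) GS criterion is convex and varies continuously, the auxiliary constants $\epsilon, \delta, t$ range over intervals, the partition $\{s_{v,e}\}$ is discrete but any two partitions with the same sum $s_v$ are connected by moving ``mass'' between edges — and here one checks that the local models $X_e$, $X_i$ deform continuously as $s_{i,e}$ changes by recutting the parallelograms $R_{i,e}$, $R_{j,e}$; the function $g$ varies in a convex (hence connected) set of admissible contour functions. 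The upshot is a smooth one-parameter family of symplectic plumbings $(N_D^{(r)}, \omega_r)$, $r\in[0,1]$, each with Liouville field $V_r$ defined near $\partial N_D^{(r)}$, interpolating the two given structures, with the underlying smooth manifold and the embedded configuration $D$ fixed (up to isotopy).

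Next I would transfer this to the boundary. Each $V_r$ induces a contact form $\alpha_r = \iota_{V_r}\omega_r$ on $Y_D^{(r)} = \partial N_D^{(r)}$. After trivializing the family of boundary 3-manifolds — they are all diffeomorphic since the smooth plumbing of $D$ is fixed — I obtain a smooth path of contact forms $\alpha_r$ on a fixed 3-manifold $Y_D$, all defining positive contact structures (for the concave case, on $-Y_D$; the sign bookkeeping is exactly as in the excerpt, where $z\in(\RR_+)^k$ forces $V$ inward along $\partial f^{-1}([0,t])$). Gray stability then gives an isotopy $\psi_r$ with $\psi_r^*\xi_D^{(r)} = \xi_D^{(0)}$, proving the contact structures are contactomorphic and independent of all choices and of $\omega$. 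I would take care that the path of contact structures is genuinely smooth across the discrete changes in $\{s_{v,e}\}$: the point is that the induced contact structure on the boundary region near an edge $e$ is, up to contactomorphism, the standard one determined by the framing data, and changing the splitting $s_{i,e}+s_{j,e}=$ (total) only moves where along the plumbing the ``twisting'' is recorded, not the contact germ of the boundary. This is the step I expect to be the main obstacle, and the cleanest way to handle it is a local computation on the toric piece $\mu^{-1}(R_e)$ showing that the contact boundary, as a piece of the eventual $Y_D$, depends only on $z_i', z_j'$ and the combinatorics of $D$, not on the individual $s_{i,e}$.

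For the final clause, suppose $D$ arises as the exceptional divisor resolving an isolated normal surface singularity $(V,p)$. Then $V$ carries its link $(L, \xi_{can})$ with the canonical (Milnor fillable) contact structure, and a neighborhood of the exceptional divisor in a resolution is a convex (Stein) plumbing whose boundary is $L$. I would observe that a Kähler form near the exceptional set, after a small perturbation making the components $\omega$-orthogonal (which does not change the contact structure on the boundary, by the first part), yields a symplectic divisor satisfying the negative GS criterion — the GS vector $z$ here has negative entries coming from negativity of the intersection form on the exceptional locus (Grauert), more precisely from the existence of a strictly plurisubharmonic exhaustion. Since both the complex-geometric contact structure and the GS contact structure arise from Liouville fields on symplectic (Stein resp. GS) plumbings of the same $D$, the first part of the proposition — which shows the contact structure depends only on the topological divisor — identifies them. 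One should be mildly careful that the Kähler plumbing need not be literally of GS form, but it is deformation equivalent to one relative to $D$, and Gray stability again bridges the gap.
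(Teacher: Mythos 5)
First, a point of reference: the paper does not prove Proposition \ref{prop:unique-contact} at all --- it is quoted from \cite{LiMa14-divisorcap} (cf.\ \cite{McL16-discrepancy}), so there is no in-paper proof to compare against. Your overall strategy (connect any two admissible data sets by a path, obtain a family of Liouville boundaries, apply Gray stability) is indeed the strategy of the cited sources, so the shape of the argument is right. But two steps are genuine gaps rather than routine verifications. The first is the one you flag yourself: the parameters $s_{i,e}$ are \emph{integers} --- the identification of $\mu_e^{-1}(R_{i,e})$ with $(x_{i,e}-2\epsilon,x_{i,e}-\epsilon)\times S^1\times D^2(\sqrt{2\delta})$ is an integral affine change of coordinates, so you cannot ``recut the parallelograms'' continuously through non-integral slopes; there is no path in the parameter space through these choices, and Gray stability simply does not apply across them. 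What is needed here is a separate argument (a normal form for the contact germ on the boundary torus over the edge region, showing it depends only on $z_i',z_j'$ and the total $s_i$), and this is precisely the content that must be supplied, not deferred. Relatedly, the underlying smooth manifold $f^{-1}([0,t])$ and its boundary change as $\epsilon,\delta,t,z$ vary, so ``trivializing the family of boundary 3-manifolds'' requires using the Liouville flow to identify nearby level sets; this is standard but should be said, since without it $\alpha_r$ does not live on a fixed $Y_D$.

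The second gap is in the final clause. The first part of the proposition only establishes that all contact structures arising from the \emph{GS construction} agree; it says nothing about an arbitrary Liouville or Stein structure on a neighborhood of $D$. The canonical contact structure on the link of the singularity comes from a strictly plurisubharmonic exhaustion on the resolution, which is not of GS form, so you cannot ``identify them'' by citing the first part --- the assertion that the K\"ahler plumbing ``is deformation equivalent to one \emph{relative to} $D$,'' through structures for which Gray stability applies on a fixed boundary, is exactly the theorem to be proved (this is the substance of \cite{Caubel-Nemethi-Pampu2006} and of McLean's notion of contact structures compatible with a divisor, where one shows the relevant space of such structures is nonempty and connected). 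As written, this part of your argument assumes what it needs to establish.
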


This motivates us to consider the notion of convexity for topological divisors. A topological divisor $ D $ is called {\bf concave} (resp. {\bf convex}) if there exists $ z\in (\RR_+)^r $ (resp. $ z\in (\RR_{\le 0})^r $) such that $ a=Q_Dz\in (\RR_+)^r $. Then there is a contact manifold $ (-Y_D,\xi_D) $ (resp. $ (Y_D,\xi_D) $) and, for each choice of such $z$, a symplectic cap (resp. filling) $ (N_D,\omega_z) $ containing $ D $ as a symplectic divisor. 
One can check by simple linear algebra that being concave (resp. convex) is preserved by toric blow-up (see for example Lemma 3.8 of \cite{LiMa14-divisorcap}).
\begin{rmk}
	The notions of convex and concave for topological divisors are less restrictive than that for symplectic divisors, as we do not fix the symplectic area $a$.
\end{rmk}

When $ D $ is convex, $ (Y_D,\xi_D ) $ is contactomorphic to the contact boundary of some isolated surface singularity (\cite{Grauert1962}) and is called a Milnor fillable contact structure. A closed 3-manifold $ Y $ is called Milnor fillable if it carries a Milnor fillable contact structure. For every Milnor fillable $ Y $, there is a unique Milnor fillable contact structure (\cite{Caubel-Nemethi-Pampu2006}), i.e. the contact structure $ \xi_D  $ only depends on the oriented homeomorphism type of $ Y_D $ instead of $ D $ when $ D $ is convex. 

In light of this uniqueness result, it is natural to ask if similar results hold when $ D $ is concave. The answer is no and the following counterexample is given in \cite{LiMa14-divisorcap}.

\begin{example}[Example 2.21 of \cite{LiMa14-divisorcap}]
	Let $ D_1 $ be a single sphere with self-intersection $ 1 $ and $ D_2 $ be two spheres with self-intersections $ 1 $ and $ 2 $ intersecting at one point as follows.
	\[
%	\begin{tikzpicture}
%	\node at (0,0) [circle,fill,outer sep=5pt, scale=0.5] [label=above: $ 1 $]{};
%	\end{tikzpicture}\quad\quad\quad
	\begin{tikzpicture}
	\node (x) at (0,0) [circle,fill,outer sep=5pt, scale=0.5] [label=above: $ 1 $]{};
	\node (y) at (2,0) [circle,fill,outer sep=5pt, scale=0.5] [label=above: $ 2 $]{};
	\draw (x) to (y);
	\end{tikzpicture}
	\]
	Both divisors have a concave neighborhood. By \cite{Ne81-calculus} we can see that $ -Y_{D_1} $ and $ -Y_{D_2} $ are both orientation preserving homeomorphic to $ S^3 $. However, $ \xi_{D_1} $ is the unique tight contact structure on $ S^3 $ while $ \xi_{D_2}  $ is overtwisted.
\end{example}

So far all the counterexamples we can construct consist of divisors with different $ b^+ $ and also only one of them is fillable. So we refine our question to the following:
\begin{question}\label{question:divisor contact}
	Suppose $ D_1 $ and $ D_2 $ are concave divisors with $ -Y_{D_1}\cong -Y_{D_2} $. 
	Suppose either $ b^+(Q_{D_1})=b^+(Q_{D_2}) $ or $\xi_{D_1},\xi_{D_2}$ both symplectically fillable, then is $ (-Y_{D_1},\xi_{D_1}) $ contactomorphic to $ (-Y_{D_2},\xi_{D_2}) $?
\end{question}

We first introduce two operations on topological divisors.
\begin{definition}\label{def:toric eq}
	For a topological divisor $ D=\cup C_i $, a \textbf{toric blow-up} is the operation of adding a sphere component $E$ with self-intersection $ -1 $ between an adjacent pair of component $ C_i $ and $ C_{j} $, and changing the self-intersection of $ C_i $ and $ C_{j} $ by $ -1 $. \textbf{Toric blow-down} is the reverse operation.
	
	$ D^0 $ and $ D^1 $ are \textbf{toric equivalent} if they are connected by toric blow-ups and toric blow-downs. $ D $ is said to be \textbf{toric minimal} if no component is an exceptional sphere (i.e. a component of self-intersection $ -1 $).
\end{definition}
\begin{definition}\label{def:interior blow up}
	For a topological divisor $D=\cup C_i$, an \textbf{interior blow-up} is the operation of adding a sphere component $E$ with self-intersection $-1$ intersecting some component $C_i$ at one point, and changing the self-intersection of $C_i$ by $-1$. The reverse operation is called an \textbf{interior blow-down}.
	% If we forget about the area $ a $, such operations can be performed on a decorated graph and thus on a topological divisor. In this case, they are called the \textbf{interior blow-up} of vertex $ v $ and the \textbf{interior blow-down}.
\end{definition}
Since blow-ups and blow-downs can be performed in the symplectic category, these operations have symplectic analogues by adding an extra parameter of symplectic area. They will be described for augmented graphs in Section \ref{section:contact toric} and \ref{section:interior inv}.

Note that two divisors give the same oriented plumbed 3-manifold if and only if they are related by Neumann's plumbing moves (\cite{Ne81-calculus}), including toric blow-ups/blow-downs and interior blow-ups/blow-downs introduced above. To solve Question \ref{question:divisor contact}, it suffices to understand how the induced contact structure changes when we perform Neumann's plumbing moves.
As a first step towards this goal, we have the following proposition, whose proof is technical and thus deferred to the appendix.

%Since toric blow-up and blow-down preserves the oriented diffeomorphism type of $ Y_D $, it is a natural question to ask if they also preserve the contact structure induced by the positive/negative GS criterion. We give an affirmative answer to the question, the proof of which will appear in the appendix.

\begin{prop}\label{prop:contact toric eq}
	The contact structure induced by the GS construction is invariant under \begin{enumerate}
		\item toric blow-ups/blow-downs,
		\item and interior blow-ups/blow-downs.
	\end{enumerate}
\end{prop}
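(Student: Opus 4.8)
The plan is to work directly with the explicit local models $(X_e,C_e,\omega_e,V_e,f_e)$ and $(X_i,C_i,\omega_i,V_i,f_i)$ from the GS construction and show that a blow-up operation on $D$ can be realized, up to the contactomorphism of the boundary, by re-gluing these models without changing the germ of the Liouville vector field near $\partial N_D$. The key observation is that the contact boundary $(-Y_D,\xi_D)$ is assembled out of pieces, one over each vertex and one over each edge, and that a blow-up is a purely local modification of the graph $\Gamma$: it affects only the vertices and edges in a neighborhood of where the new exceptional sphere $E$ is attached. So it suffices to produce, for each type of blow-up, a contactomorphism between the portion of the boundary coming from the old local piece and the portion coming from the new local pieces, and then check it glues with the identity on the unchanged part.

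For a \textbf{toric blow-up}, the new vertex $E$ of self-intersection $-1$ sits between $C_i$ and $C_j$ along the edge $e$; the new intersection matrix and area vector are obtained by the linear-algebra move recorded in Lemma 3.8 of \cite{LiMa14-divisorcap}, so a compatible $z$-vector for the blown-up divisor still exists. On the toric side this is transparent: the region $R_e\subset\RR^2$ for the edge $e$ gets replaced by a union of three regions $R_{e'}\cup R_E\cup R_{e''}$ obtained by a corner chop of the moment polytope, which is exactly the toric blow-up of $S^2\times S^2$ at a corner. I would choose the auxiliary function $g$ and the constants $\gamma,\delta,\epsilon$, and the edge-distributions $s_{i,e},s_{j,e}$, so that the Liouville vector field $V_e$ (lift of the radial field) on the outer part of $R_e$ agrees with the corresponding lifts on the outer parts of $R_{e'}$ and $R_{e''}$; since $V$ near $\partial N_D$ only sees the part of the moment image far from the divisor, the contact form $\iota_V\omega$ on the boundary is unchanged. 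The bookkeeping is that the parallelograms $R_{i,e},R_{j,e}$ used to glue in the vertex pieces $X_i,X_j$ are literally the same before and after, so the vertex pieces glue on unchanged.

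For an \textbf{interior blow-up}, the exceptional sphere $E$ meets a single component $C_i$ at one interior point and $[C_i]^2$ drops by $1$. Here the natural model is to take the vertex piece $X_i=\Sigma_i\times D^2(\sqrt{2\delta})$, remove a Darboux ball in the interior, and glue in the local model $X_E$ for a $(-1)$-sphere meeting one other surface; equivalently, one performs an honest symplectic blow-up of $X_i$ at an interior point of $C_i$ with a tiny weight $\varepsilon$. The point is that symplectic blow-up with small enough exceptional area does not change the symplectomorphism type of the complement of a neighborhood of the exceptional locus, in particular it does not touch a collar of $\partial X_i$; hence the Liouville vector field and thus the contact structure on $\partial N_D$ are unchanged, while the new divisor sitting inside is precisely the interior blow-up of $D$. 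One must check that the drop of $[C_i]^2$ by $1$ is compatible with still satisfying the GS criterion (again elementary linear algebra, as in \cite{LiMa14-divisorcap}), and that the genus and the collar data of $\Sigma_i$ used in the gluing are unaffected.

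The \textbf{main obstacle} I anticipate is not conceptual but a matter of making the ``only the germ near the boundary matters'' principle precise and uniform: one has to verify that all the choices entering the GS construction (the function $g$, the constants $\gamma,\delta,\epsilon,t$, and especially the integer splittings $s_{v,e}$ of the self-intersection numbers among incident edges) can be made compatibly on both sides of a blow-up so that the two constructed plumbings actually share an identical collar of the boundary, not merely diffeomorphic ones. Since Proposition \ref{prop:unique-contact} already guarantees the resulting contact structure is independent of all these choices, it is enough to exhibit \emph{one} good set of choices on each side realizing a literal match on a boundary collar, and then the induced contactomorphism is the identity there and patched by the (manifestly unchanged) identifications over the untouched vertices and edges. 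Carrying this out edge-by-edge and vertex-by-vertex, while slightly tedious, is the content of the argument; this is why the full proof is deferred to the appendix.
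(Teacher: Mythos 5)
Your proposal follows essentially the same route as the paper's appendix: for toric blow-ups the paper performs exactly your corner chop of the edge region $R_{e_0}$, identifies the local models of the blown-up graph with the three subregions via $GL(2,\ZZ)$ transformations preserving the radial Liouville field, fixes one compatible set of parameters $\epsilon^{(i)},\delta^{(i)},a_0,s_{v,e}$ (which suffices by Proposition \ref{prop:unique-contact}), and then obtains the boundary contactomorphism from the fact that an interior symplectic blow-up does not change the boundary together with a Liouville-flow isotopy onto the smaller plumbing --- the precise mechanism behind your slightly looser claim that the contact form on the boundary is ``unchanged.'' For interior blow-ups the paper packages your Darboux-ball argument as a toric blow-up of a ``half edge'' (a $D^2\times D^2$ chart around an interior point of $\Sigma_v$ treated as an edge region with a single gluing region), which is the same computation in different clothing.
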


In light of Proposition \ref{prop:contact toric eq}, we see that toric equivalence is a natural equivalence on divisors. For the study of contact structures and symplectic fillings, it suffices to consider toric minimal divisors. In particular it is used in the proof of Theorem \ref{thm:universally-tight}.

% \begin{remark}
% 	By Proposition \ref{prop:contact toric eq} and Theorem 1.3 of \cite{LiMaMi-logCYcontact}, we see that if restricted to circular spherical divisors, 
% 	we get a positive answer to Question \ref{question:divisor contact} in the case of fillable concave circular spherical divisors.if restricted to circular spherical divisors, 
% 	Since the torus bundles in Theorem \ref{thm:list} are all distinct, we get a 
% \end{remark}

Note that all Milnor fillable contact structures are Stein fillable. Then we raise another question related to the fillability of divisor contact structures when $D$ is concave.
\begin{question}
	Is there a graph 3-manifold $Y$ such that $(-Y,\xi)$ is symplectically fillable for some contact structure $\xi$, but $-Y$ has no fillable divisor contact structure, i.e. for any concave $D$ with $Y_D=Y$, $(-Y,\xi_D)$ is not fillable?
\end{question}

% \section{Open book decomposition and contact structure of circular spherical divisors}

\subsection{Non-negative divisor and open book decompositions}\label{section:openbook}
This subsection is devoted to the proof of Proposition \ref{construct-OBD}. We first recall some generalities on open book decompositions and refer the readers to \cite{etnyre-OBD} and \cite{ozbagci-stipsicz} for further details. An open book decomposition of a 3-manifold $ Y $ is a pair $ (B,\pi ) $ where $ B $ is an oriented link in $ Y $ such that $ \pi:Y\backslash B\to S^1 $ is a fiber bundle where the fiber $ \pi^{-1}(\theta)  $ is the interior of a compact surface $ \Sigma_\theta $ with boundary $ B $, for all $ \theta \in S^1 $. For each $ \theta\in S^1 $, $ \Sigma_\theta  $ is called a page while $ B $ is called the binding of the open book. An open book decomposition can also be described as $ (\Sigma,h) $ where $ \Sigma  $ is an oriented compact surface with boundary and $ h:\Sigma\to \Sigma  $ is a diffeomorphism such that $ h $ is identity in a neighborhood of $ \partial \Sigma  $. The map $ h $ is called the monodromy. 

An open book decomposition $ (B,\pi ) $ of a 3-manifold $ Y $ supports a contact structure $ \xi  $ on $ Y $ if $ \xi  $ has a contact form $ \alpha  $ such that $ \alpha(B)>0 $ and $ d\alpha (\Sigma)>0 $. Suppose we have an open book decomposition with page $ \Sigma  $ and monodromy $ h $. Attach a 1-handle to the surface $ \Sigma  $ along the boundary $ \partial \Sigma  $ to obtain a new surface $ \Sigma' $. Let $ \gamma  $ be a closed curve in $ \Sigma' $ transversely intersecting the cocore of this 1-handle exactly once. Define a new open book decomposition with page $ \Sigma' $ and monodromy $ h'=h\circ \tau_\gamma  $, where $ \tau_\gamma  $ denotes the right Dehn twist along $ \gamma  $. The resulting open book decomposition is called the a positive stabilization of the original one. The inverse of this process is called a positive destabilization. In \cite{giroux} Giroux established the one-to-one correspondence between oriented contact structures on $ Y $ up to isotopy and open book decompositions of $ Y $ up to positive stabilization. This correspondence is of fundamental importance and enables us to study contact structures through open book decompositions.

For the construction of open book decompositions, it is convenient to introduce the following notions. Let $D=\cup C_i$ be a topological divisor such that $D\cdot C_i\neq 0$ for some $i$. Then $D$ is called \textbf{non-negative} if $D\cdot C_i\ge 0$ for all $i$. Equivalently its associated decorated graph $\Gamma$ would satisfy $s_i+d_i\ge 0$ for all $i$ and $s_j+d_j\neq 0$ for some $j$, where $ d_i $ is the valence of vertex $ v_i $. Similarly we can define a topological divisor to be \textbf{non-positive}, \textbf{positive}, and \textbf{negative} in the obvious way. These notions were first introduced in \cite{etgu-ozbagci} and are intimately related to open book decompositions. Here is an easy observation.

\begin{lemma}\label{lemma:toric non-negative}
	A topological divisor being non-negative is preserved by toric blow-down.
\end{lemma}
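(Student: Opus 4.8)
The plan is to unwind the combinatorial characterization of non-negativity and track it through a single toric blow-down, since the general case follows by iteration. Recall that for the decorated graph $\Gamma=(V,E,g,s)$ associated to $D$, non-negativity says $s_i+d_i\ge 0$ for every vertex $v_i$ and $s_j+d_j>0$ for at least one $j$, where $d_i$ is the valence of $v_i$. A toric blow-down removes an exceptional sphere $E$ (a vertex $v_0$ with $s_0=-1$) that connects two vertices $v_i$ and $v_j$, and increases both $s_i$ and $s_j$ by $1$. I would therefore simply compute the quantity $s+d$ at each vertex of the blown-down divisor $D'$ and verify the two conditions persist.

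First I would handle the two affected vertices $v_i,v_j$. In $D'$, the self-intersection of $v_i$ becomes $s_i+1$, but its valence drops by $1$ (it loses the edge to $E$ and gains the edge to $v_j$ — so actually in the standard toric picture the valence is unchanged, as $v_i$ was adjacent to $E$ and becomes adjacent to $v_j$), so $s_i'+d_i' = (s_i+1)+d_i \ge s_i+d_i \ge 0$; likewise for $v_j$. For any vertex $v_k$ not equal to $v_0,v_i,v_j$, nothing changes, so $s_k'+d_k' = s_k+d_k\ge 0$. This establishes the inequality $s'+d'\ge 0$ everywhere on $D'$.

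For the strict inequality at some vertex, I need to be a little careful: the vertex $v_j$ witnessing $s_j+d_j>0$ in $D$ might be the exceptional sphere $v_0$ itself. But $v_0$ has $s_0=-1$ and valence $d_0=2$, so $s_0+d_0=1>0$ — meaning $v_0$ could a priori be the \emph{only} such vertex. I would rule this out, or rather show that in this case the two neighbors $v_i,v_j$ of $v_0$ inherit the strict inequality in $D'$: since $D$ is non-negative we have $s_i+d_i\ge 0$, hence $s_i'+d_i' = s_i+d_i+1 \ge 1>0$ in $D'$. (If instead the witness $v_j$ in $D$ is different from $v_0$, it survives unchanged in $D'$, or is one of $v_i,v_j$ where the value only increased.) Either way some vertex of $D'$ has $s'+d'>0$, so $D'$ is non-negative.

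The only genuine subtlety — and the step I would flag as requiring the most care — is bookkeeping the valence change correctly under toric blow-down, because whether $d_i$ stays the same or decreases depends on the precise convention (in Definition \ref{def:toric eq} the exceptional sphere is inserted \emph{between} an adjacent pair, so blowing down re-identifies that adjacency and leaves valences of $v_i,v_j$ unchanged while deleting $v_0$). Once this is pinned down, the argument is the elementary case analysis above. I would also remark that the analogous statement fails for interior blow-down in general, which is why the lemma is stated only for the toric case, and that this lemma is what lets us pass to a toric-minimal non-negative representative when proving Theorem \ref{thm:universally-tight}.
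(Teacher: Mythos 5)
Your argument is correct: the paper states this lemma as an ``easy observation'' and omits the proof entirely, and your bookkeeping of $s_i+d_i$ under a toric blow-down (valences of the two neighbors unchanged, self-intersections each increased by $1$, so $s_i+d_i$ strictly increases there and is untouched elsewhere) is exactly the intended argument, including the observation that the strict witness is automatic at a neighbor of the blown-down sphere even if the only witness in $D$ was the exceptional vertex itself.
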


It was shown in \cite{GaMa13-LF} that all non-positive divisors are actually negative definite and thus convex. For non-negative divisors, we can show the following:

\begin{lemma}\label{lemma:non-negative => positive GS}
	All non-negative divisors are concave.
\end{lemma}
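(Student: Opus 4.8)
The plan is to show that a non-negative topological divisor $D=\cup_{i=1}^k C_i$ satisfies the positive GS criterion, i.e. that there exists $z\in(\RR_+)^k$ with $Q_D z = a$ for some (hence any suitable) area vector $a\in(\RR_+)^k$. Since being concave only requires the existence of \emph{some} positive $a$ of this form (we are free to choose the areas, by the Remark after the definition of convex/concave topological divisors), it suffices to exhibit a single $z\in(\RR_+)^k$ for which $Q_D z\in(\RR_+)^k$. Writing out the $i$-th coordinate of $Q_D z$, this is
\[
(Q_D z)_i \;=\; s_i z_i + \sum_{e\in\mc{E}(v_i)} z_{j(e)},
\]
where $j(e)$ is the index of the other endpoint of $e$. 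The natural first attempt is the constant vector $z=(1,1,\dots,1)$: then $(Q_D z)_i = s_i + d_i = D\cdot C_i \ge 0$ for all $i$, with strict inequality for at least one $i$ by the non-negativity hypothesis. This is almost what we want, but not quite: some coordinates of $Q_D z$ may be zero, whereas the GS criterion demands a strictly positive area vector.

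The key step is therefore to perturb the all-ones vector to make every coordinate strictly positive while staying in the positive orthant. First I would reduce to the case where $D$ (equivalently its graph $\Gamma$) is connected, which is part of the standing definition of a topological divisor. Connectedness is what lets a single vertex $v_j$ with $s_j+d_j>0$ propagate positivity to the whole graph. Concretely, set $z=\mathbf{1}+t\,w$ for a small parameter $t>0$ and a vector $w$ to be chosen: we need $(Q_D(\mathbf 1 + tw))_i = (D\cdot C_i) + t(Q_D w)_i > 0$ for every $i$. For indices $i$ with $D\cdot C_i>0$ this holds automatically for $t$ small. For indices $i$ with $D\cdot C_i = 0$ we need $(Q_D w)_i>0$. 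One clean choice is to take $w$ supported near a ``good'' vertex and use that the graph is connected: for instance let $w$ have a large positive entry at the good vertex $v_j$ and suitably chosen entries elsewhere, or — more robustly — argue by a maximum-principle/induction along a spanning tree rooted at $v_j$, increasing $z_i$ slightly for each vertex at distance one, then two, and so on, so that the defect $D\cdot C_i=0$ at a neighbor of an already-boosted vertex is overcome by the $+z_{j}$ contribution from that neighbor. Keeping $t$ small throughout guarantees $z\in(\RR_+)^k$.

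The main obstacle I anticipate is purely bookkeeping: making the perturbation argument uniform when several zero-defect vertices are adjacent to one another, so that boosting one does not have to be undone to boost the next. The cleanest way around this is probably not an explicit $w$ but a soft compactness/continuity argument — show that the set $\{z\in(\RR_{\ge 0})^k : Q_D z\ge 0\}$ is a nonempty polyhedral cone containing $\mathbf 1$, and that the ``bad face'' where some coordinate of $Q_D z$ vanishes is a proper subface; then any interior point of the cone works, and connectedness of $\Gamma$ together with the existence of the good vertex $v_j$ is exactly what makes $\mathbf 1$ lie outside that bad face after an arbitrarily small push. Either way, once such a $z$ is produced, setting $a=Q_D z$ gives an area vector in $(\RR_+)^k$, so $(\Gamma,a)$ satisfies the positive GS criterion and $D$ is concave by definition; if one additionally wants an $\omega$-orthogonal symplectic model, Theorem \ref{thm:divisorcap} then furnishes the concave plumbing directly.
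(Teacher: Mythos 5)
Your proposal is correct and follows essentially the same route as the paper: start from $z=\mathbf{1}$, observe $Q_D\mathbf{1}=(D\cdot C_i)_i\ge 0$ with some strictly positive entry, and then iteratively perturb $z$ by small positive amounts, using connectedness of $\Gamma$ and the non-negativity of the off-diagonal entries of $Q_D$ to propagate strict positivity to all coordinates. The paper's bookkeeping is the one you sketch (boost $z_i$ at an already-positive vertex adjacent to a zero-defect vertex, taking $\epsilon$ small so that $a_i+\epsilon Q_{ii}$ stays positive, so the positive index set strictly grows at each step), which resolves the "adjacent zero-defect vertices" worry you raise.
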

\begin{proof}
	Let $ D  $ be a non-negative divisor with $ r $ components and denote by $ Q_D=(Q_{ij}) $ the intersection matrix. We will find a pair of vectors $ z $ and $ a $ through an iterated perturbation process.
	
	Start with $ z=(1,\dots,1)^T $ and $ a=Q_D z $. Since $ D  $ is non-negative, we have $ a_j\ge 0 $ for all $ j $ and $ a_i>0 $ for some $ i $. So the index set $ I=\{ i|a_i>0 \} $ is nonempty.
		
%	Let $ adj(I)=\{ j| Q_{ij}>0 \} $ be the set of vertices adjacent to the vertices in $ I $. Let $ z'=(z_1',\dots,z_k') $ be such that $ z $
	Suppose $ a_l=0 $ and $ Q_{il}>0 $ for some $ i\in I $. Let $ z' $ be a new vector such that $ z_i'=z_i+\epsilon $ for some small positive $ \epsilon $ and $ z_j'=z_j $ for all other $ j $. Then we let $ a'=Q_D z' $ such that $ a'_j=a_j+\epsilon Q_{ji} $ for all $ j $. Since $ Q_{ji}\ge 0 $ for $ j\neq i $, we have $ a_j'\ge a_j $ for all $ j\neq i $. In particular, $ a_l'=a_l+\epsilon Q_{li}=\epsilon Q_{li}>0 $ as $ Q_{li}>0 $. For $ \epsilon $ small enough, we can also require that $ a_i'=a_i+\epsilon Q_{ii}>0 $. So we have $ I'=\{ i|a_i'>0 \}\supset I\cup \{ l \} $.
	
	We could repeat the process using $ I',z',a' $ as the new $ I,z,a $. Since the divisor $ D  $ is finite, this process stops at some finite time and produces a pair of vectors $ z ,a\in (\RR_+)^r $ such that $ Q_D z=a $.
\end{proof}

Based on their construction of convex divisor neighborhoods, Gay and Stipsicz constructed an open book decomposition supporting the induced contact structure on the boundary when the divisor is non-positive (\cite{GaSt09}). We first recall their construction and then extend it to the case of non-negative divisors.

\begin{lemma}[cf. \cite{GaSt09}]
	Let $ M^\pm =\pm [0,1]\times S^1\times S^1 $ with coordinates $ t\in [0,1],\alpha\in S^1 $ and $ \theta\in S^1 $. Given a nonnegative integer $ m $ there exists an open book decomposition $ \mbf{ob}^\pm_m=(B,\pi^\pm ) $ on $ M^\pm $ such that the following conditions hold:\begin{enumerate}
		\item $\pi^\pm |_{\{0\}\times S^1\times S^1} =\theta $
		\item $\pi^\pm |_{\{1\}\times S^1\times S^1} =\theta\pm m\alpha  $
		\item $ B $ has $ m $ components $ B_1,\dots,B_m $, which we take to be $ B_i=\{\dfrac{1}{2}\}\times \{\dfrac{2\pi i}{m} \}\times S^1 $
		\item The binding and pages can be oriented so that $ \pm \partial_\theta  $ is positively tangent to $ B_i $ and positively transverse to pages.
	\end{enumerate}
\end{lemma}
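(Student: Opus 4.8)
The plan is to construct the open book decompositions $\mathbf{ob}^\pm_m$ explicitly as pullbacks of a model open book on the solid torus, then verify the four listed properties by direct inspection. I will treat the $+$ case; the $-$ case follows by reversing the orientation of the $t$-interval.

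First I would set up the combinatorial picture. On $M^+=[0,1]\times S^1\times S^1$, think of the binding $B$ as the $m$ parallel tori $B_i=\{1/2\}\times\{2\pi i/m\}\times S^1$, which are $m$ disjoint $\theta$-circles sitting over the slice $t=1/2$. Removing a tubular neighborhood of $B$ from the $t=1/2$ slice leaves $m$ annuli in the $(t,\alpha)$-plane, and crossing with the $\theta$-circle gives $m$ solid-torus-like pieces; the pages should be the preimages $(\pi^+)^{-1}(\mathrm{pt})$, each a surface with $m$ boundary circles equal to the $B_i$ (oriented appropriately). Concretely, I would define $\pi^+$ on $M^+\setminus B$ by prescribing it near each $B_i$ to look like the standard angular coordinate of a disk bundle (so that the pages spiral around each binding component), and away from the binding to interpolate between the boundary prescriptions $\pi^+|_{t=0}=\theta$ and $\pi^+|_{t=1}=\theta+m\alpha$. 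The key point is that as $\alpha$ runs once around $S^1$, the function $\theta+m\alpha$ winds $m$ times, which is exactly accounted for by the $m$ binding components: each time $\pi^+$ would be forced to "jump," a binding circle $B_i$ absorbs the monodromy. This is the mechanism by which property (2) is made compatible with property (1) on a connected fiber bundle over $S^1$.

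The main technical step is producing an explicit formula for $\pi^+$ that is genuinely a submersion on $M^+\setminus B$ with the prescribed boundary behavior, and checking that the fibers are connected surfaces with boundary exactly $B$. A clean way to do this is to work in the $(t,\alpha)$-square with the $m$ marked points $\{(1/2, 2\pi i/m)\}$ removed, build an explicit $S^1$-valued Morse-free map there that restricts to $\theta\mapsto 0$-type data on $t=0$ and to winding-$m$ data on $t=1$ and has the correct local model (a full turn) around each puncture, and then define $\pi^+$ on $M^+\setminus B$ by combining this with the $\theta$-coordinate. One then reads off the page as the preimage surface and computes its boundary; properties (3) is automatic from the construction, and (1), (2) hold by design on the two boundary slices. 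For property (4), I would fix orientations on $B_i$ and on the pages coming from the fiber-bundle structure and check the sign of $d\pi^+(\partial_\theta)$ and the tangency of $\partial_\theta$ to $B_i$; since $B_i$ is literally a $\theta$-circle and $\pi^+$ restricted near $B_i$ is the disk angular coordinate (independent of $\theta$ to leading order), $\partial_\theta$ is tangent to $B_i$ and transverse to every page, and the orientations can be chosen to make both positive simultaneously.

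I expect the genuine obstacle to be not any single computation but the bookkeeping of orientations and the verification that the local disk-bundle model around each $B_i$ glues smoothly to the interpolating region in the square — i.e., that the explicit $\pi^+$ is smooth and a submersion across the gluing locus, so that $(\pi^+)^{-1}(\theta)$ really is an open book page rather than merely a singular fibration. Once the model map on the punctured square is written down carefully (for instance, as an argument-type function built from the product $\prod_i (w - w_i)$ in a local complex coordinate $w$ on the square, suitably truncated and interpolated), smoothness and the submersion property follow, and the remaining assertions are routine. I would therefore spend the bulk of the proof on that explicit construction and then dispatch properties (1)–(4) in a short paragraph each.
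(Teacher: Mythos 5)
Your construction is correct and is essentially the one the paper uses (quoting Gay--Stipsicz for $M^+$): the paper obtains $\mathbf{ob}^\pm_m$ by stacking $m$ copies of a one-binding building block $\pm[0,1]\times[0,1]\times S^1$ whose $\pi$ equals $\theta$ on three boundary faces and $\theta\mp 2\pi y$ on the fourth, which is exactly your global argument-function map on the $m$-punctured $(t,\alpha)$-annulus restricted to the sub-squares $[0,1]\times[2\pi i/m,\,2\pi(i+1)/m]$. Your handling of $M^-$ by reversing orientation and your boundary-orientation check for property (4) likewise match the paper's remark that orienting pages so that $-\partial_\theta$ is positively transverse induces the orientation on $B_P$ for which $-\partial_\theta$ is positively tangent.
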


\begin{figure}[h]
	\begin{tikzpicture}
	\node[inner sep=0pt] (positive) at (0,0)
	{\includegraphics[width=.53\textwidth]{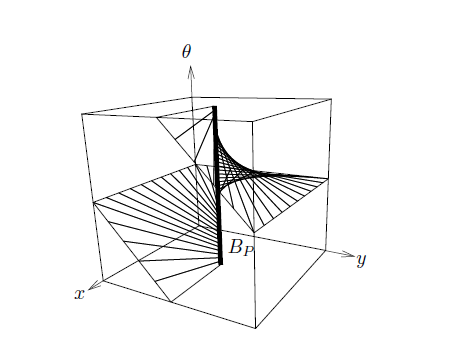}};
	\node[inner sep=0pt] (negative) at (6,0)
	{\includegraphics[width=.4\textwidth]{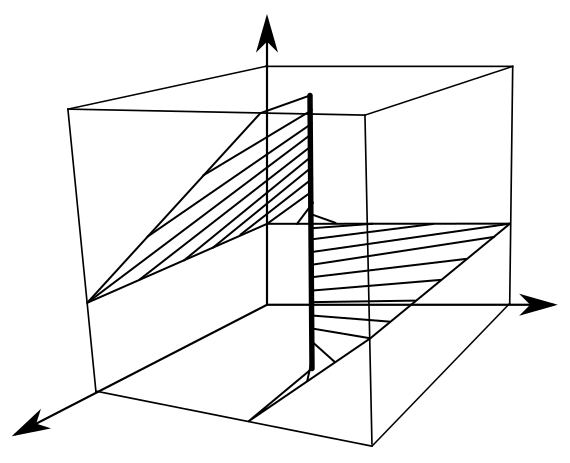}};
	\node[inner sep=0pt] (B) at (6,-1){$ B_P $};
	\node[inner sep=0pt] (x) at (3.1,-2){$ x $};
	\node[inner sep=0pt] (y) at (8.9,-0.9){$ y $};
	\node[inner sep=0pt] (theta) at (5.9,2.3){$ \theta $};
	\end{tikzpicture}
	\caption{Building blocks for the open book}
	\label{figure:openbook}
\end{figure}
\begin{proof}
	This lemma was proved in \cite{GaSt09} for $ M^+ $ only, where $ \mbf{ob}_m^+ $ is constructed by stacking $ m $ copies of building blocks with page shown on the left of Figure \ref{figure:openbook}. It's easy to see the same proof works for $ M^- $ by stacking copies of building blocks with page shown on the right of Figure \ref{figure:openbook}.
	
	The building blocks on the right are $P=-[0,1]\times [0,1]\times S^1$ with coordinates $(x,y,\theta)$. It is equipped with the open book decomposition $(B_P,\pi_P)$ satisfying $B_P=\{\dfrac{1}{2}\}\times \{\dfrac{1}{2}\}\times S^1$, $\pi|_{\{0\}\times [0,1]\times S^1}=\theta$, $\pi|_{ [0,1]\times \{0\}\times S^1}=\theta$, $\pi|_{ [0,1]\times \{1\}\times S^1}=\theta$ and $\pi|_{\{1\}\times [0,1]\times S^1}=\theta-2\pi y$. Note that when pages are oriented so that $-\partial_\theta$ is positively transverse, then $B_P$ is oriented (as boundary of the page) so that $-\partial_\theta$ is positively tangent.
\end{proof}
Recall that the boundary $ Y_D=\partial N_D $ is constructed by gluing $ f_i^{-1}(l) $ and $ f_e^{-1}(l) $ together if the edge $ e $ connects to vertex $ v_i $. For each vertex $ v_i $, we set the open book decomposition $\theta: f_i^{-1}(l)=C_i\times S^1_{\sqrt{2l}}\to S^1 $ to be the projection to second factor. For each edge $ e $, $ f^{-1}_e(l) $ is a submanifold with toric coordinates $ (p_1,q_1,p_2,q_2) $. We set the open book decomposition $ f^{-1}_e(l)\to S^1 $ to be $ q_1+q_2 $. Recall that each gluing region can be parametrized as $ (x_{i,e}-2\epsilon,x_{i,e}-\epsilon)\times S^1\times S^1_{\sqrt{2l}} $ with coordinates $ (t,\alpha,\theta) $ and $ q_1+q_2 $ transforms into $ (-s_{i,e}-1)\alpha +\theta  $. So if $ -s_i-d_i\ge 0 $, we can choose $ s_{i,e} $ so that $ p_{i,e}=-s_{i,e}-1 $ are all nonnegative. Then we can modify the open book decomposition on $ (x_{i,e}-2\epsilon,x_{i,e}-\epsilon)\times S^1\times S^1_{\sqrt{2l}} $ to be $ \mbf{ob}_{p_{i,e}}^+ $ and interpolate from $ q_1+q_2 $ to $ \theta  $.

Now we extend the construction to the concave case. The main difference from the convex case is that the open book decomposition supports the positive contact structure on the negative boundary $ -Y_D  $ of the concave neighborhood $ N_D $ instead of the positive boundary. So this open book is constructed by gluing $ \theta: -f_i^{-1}(l)= -\Sigma_i\times S^1_\rho \to S^1_\rho $ and $ q_1+q_2: -f_e^{-1}(l)\to S^1 $ together. Along the gluing region $ -(x_{i,e}-2\epsilon,x_{i,e}-\epsilon)\times S^1\times S^1_\rho  $ with coordinate $ (t,\alpha,\theta ) $, $ q_1+q_2 $ transforms to the function $ -(s_{i,e}+1)\alpha +\theta  $. We can modify the open book using the building block $ \mbf{ob}^-_{q_{i,e}} $ if $ q_{i,e}=s_{i,e}+1\ge 0 $. And such a choice of $ \{s_{i,e} \} $ exists if $ s_i+d_i\ge 0 $.

This open book decomposition is compatible with the canonical contact structure induced as boundary of the concave neighborhood. On $ -f_i^{-1}(l) $ the Reeb vector field is a negative multiple of $ \partial_\theta  $ and on $ -f_e^{-1}(l) $ the Reeb vector field is a negative multiple of $ b_1\partial_{q_1}+b_2\partial_{q_2} $ for some $ b_1,b_2>0 $. They are both positively transverse to the pages and positively tangent to the bindings.

% We summarize the above construction in the following proposition.

% \begin{prop}\label{construct-OBD}
% 	Let $D$ be a non-negative divisor and $(N_D,\omega_z)$ the concave plumbing associated to some $z$ satisfying the positive GS criterion. Then there is an open book decomposition supporting $(-Y_D,\xi_D)$.
% 	% Suppose the augmented diagram $ (\Gamma,a) $ satisfies the positive GS criterion and $ (P,\omega_P) $ be the concave plumbing associated to $ (\Gamma,a) $. If $ \Gamma $ is non-negative, then there exists an open book decomposition supporting $ (Y_\Gamma,\xi_\Gamma ) $ where $ Y_\Gamma =-\partial P $ and $ \xi_\Gamma  $ is the contact structure induced by the concave symplectic structure on $ P $.
% \end{prop}

For each vertex $ v_i $, let $ S_i $ be a compact surface with genus $ g_i $ and $ s_i+d_i $ boundary components. It's easy to see that the page $ S  $ of the above open book decomposition is given by connect-summing the surfaces $ S_i $ according to $ \Gamma  $. Let $ \{\gamma_1,\dots,\gamma_l \} $ be the collection of simple closed curves on $ S $ consisting of one circle around each connect-sum neck and $ \{\delta_1,\dots,\delta_q  \} $ be the collection of simple closed curves in $ S $ parallel to each boundary component. Here $ l=|E| $ and $ q=\sum_{i=1}^k(s_i+d_i)=\sum_{i=1}^ks_i + 2l $. For any simple closed curve $ c $ in $ S $, let $ \tau_c $ denote the right Dehn twist along $ c $. Then the monodromy is given by $ (\tau_{\gamma_1}\dots\tau_{\gamma_l})^{-1}(\tau_{\delta_1}\dots\tau_{\delta_q}) $. This finishes the proof of Proposition \ref{construct-OBD}.

\begin{example}
	The open book on the right of Figure \ref{figure:exampleOBD} corresponds to the divisor on the left. Here each vertex is decorated by $ (s_i,g_i) $ where $ s_i $ is the self-intersection number and $ g_i $ is the genus. Red curves are labeled with $ + $ or $ - $ to indicate that the monodromy consists of a positive or negative Dehn twist along the curve.
	\begin{figure*}[h]
		\centering
		\begin{subfigure}{0.4\textwidth}
			\begin{tikzpicture}
			\node (x1) at (0,0) [circle,fill,outer sep=5pt,scale=0.5] [label=below:{$ (1,1) $}] {};
			\node (x2) at (2,2) [circle,fill,outer sep=5pt,scale=0.5] [label=above:{$ (-2,0) $}] {};
			\node (x3) at (4,0) [circle,fill,outer sep=5pt,scale=0.5] [label=below:{$ (2,2) $}] {};
			\draw (x1) to (x2);
			\draw (x2) to (x3);
			\draw (x1) to (x3);
			\end{tikzpicture}
		\end{subfigure}
		\begin{subfigure}{0.4\textwidth}
			\includegraphics[scale=0.25]{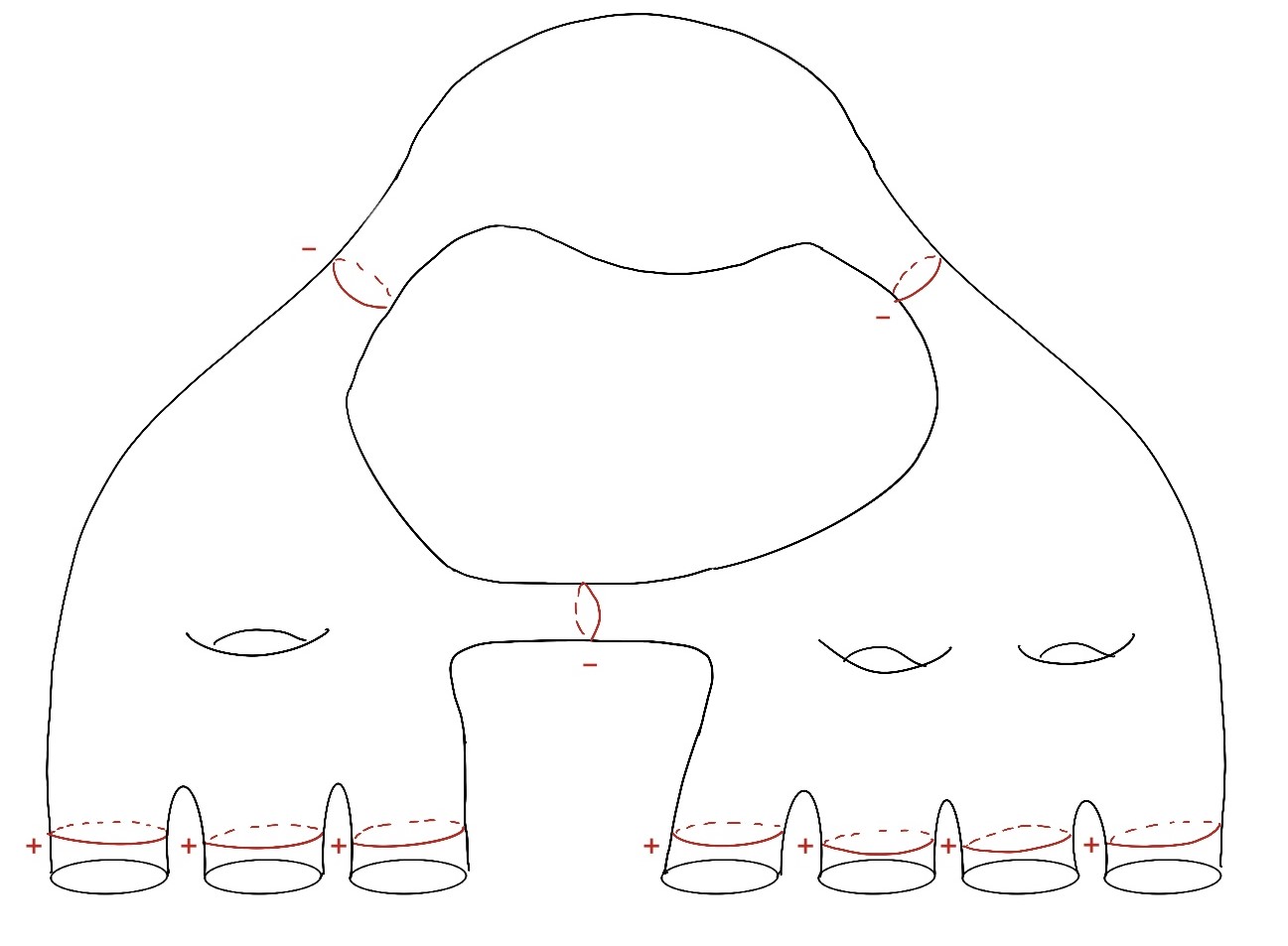}
		\end{subfigure}
		\caption{Divisor $ D $ (left) and open book decomposition for $ (Y_D,\xi_D) $}
		\label{figure:exampleOBD}
	\end{figure*}
	
\end{example}

\begin{rmk}
	The open book decomposition we constructed in the concave case matches the one constructed by Gay in \cite{gayOBD} and \cite{gayOBDerratum}. The construction of Gay makes use of handlebody theory and only works for positive divisors. Our construction is stronger as it works more generally for non-negative divisors.
	
	The open books constructed in both convex and concave cases match the ones constructed by Etgu and Ozbagci in \cite{etgu-ozbagci}, where the construction is purely topological and is not required to be compatible with a certain contact structure.
%	The concave symplectic structure constructed by Gay is one satisfying $ a_i=\lambda(s_i+d_i) $ for some $ \lambda>0 $, for all vertex $ v_i $. Recall here $ a_i $ is the symplectic area of surface $ C_i $, $ s_i $ is self-intersection number and $ d_i $ is the degree of the vertex. So it's actually the symplectic structure satisfying the positive GS criterion $ Q_\Gammaz=a $ with $ z=(\lambda,\dots,\lambda)^T $ and our construction is stronger as it adapts to more general $ z $.
\end{rmk}

\section{Universally tight contact torus bundles}\label{application}
% In this section, we survey some recent developments on the contact aspects of circular spherical divisors. In Section 3.1 we outline the proof of Theorem \ref{thm:fillability} on symplectic fillability and Theorem \ref{thm:circular-filling} on the topology of symplectic fillings. 
% %We use it to study the fillability of $ (Y_D,\xi_D) $ with $ b^+(D)=1 $ and the topology of minimal symplectic fillings. 
% Then Section 3.2 is devoted to the study of universally tightness of $ (Y_D,\xi_D) $. We review the classification result of Honda and the explicit open book decompositions of Van-Horn-Morris, accumulating to a proof of Theorem \ref{universally-tight}.

% \subsection{Universally tight contact structures on torus bundles}

Honda has classified tight contact structures on torus bundles in \cite{honda2000}, which are mostly distinguished by their $ S^1- $twisting $ \beta_{S^1} $.  In his thesis (\cite{VHM-thesis}), Van-Horn-Morris described a correspondence between open book decompositions of tight contact torus bundles and word decompositions of their monodromies. Combining their results, we can determine that the contact torus bundles $ (-Y_D,\xi_D) $ are universally tight for a large family of circular spherical divisors $ D $.

Given a convex torus $ \Sigma=\RR^2/\ZZ^2 $ inside a tight contact manifold, its slope is the slope of a closed linear curve on $ \Sigma  $ that is parallel to a dividing curve. In this case, the dividing curves are parallel and homologically essential, so the slope is well-defined. To any slope $ s $ of a line in $ \RR^2 $ we can associate its angle $ \bar{\alpha}(s)\in \RR\PP^1=\RR/\pi\ZZ  $. For $ \bar{\alpha}_1,\bar{\alpha}_2\in \RR\PP^1 $, let $ [\bar{\alpha}_1,\bar{\alpha}_2] $ be the image of the interval $ [\alpha_1,\alpha_2]\subset\RR $, where $ \alpha_i\in \RR  $ are representatives of $ \bar{\alpha}_i $ and $ \alpha_1\le \alpha_2 <\alpha_1+\pi  $. A slope $ s $ is said to be between $ s_1 $ and $ s_0 $ if $ \bar{\alpha}(s)\in [\bar{\alpha}(s_1),\bar{\alpha}(s_0)] $.

Let $ \xi  $ be a contact structure on $ T^2\times I $ with convex boundary and has boundary slopes $ s_i $ for $ T^2\times \{ i\} $, $ i=0,1 $. $ \xi  $ is minimally twisting if every convex torus $ T\times t $ has dividing set with slope between $ s_1 $ and $ s_0 $. For a minimal twisting $ \xi  $, the $ I- $twisting of $ \xi  $ is given by $ \beta_I=\alpha_1-\alpha_0 $. For general $ \xi  $, cut $ (T^2\times I,\xi ) $ into minimally twisting segments $ T_k\cong T^2\times I_k $, $ k=1,\dots, l $ and its $ I- $twisting is the sum of each: $ \beta_I=\beta_{I_1}+\dots + \beta_{I_l} $. Then for a tight contact torus bundle $ M $, we define the $ S^1- $twisting $ \beta_{S^1} $ to be the supremum of the $ I- $twisting $ \lfloor\beta_I\rfloor  $ over all splittings of $ M $ into $ T^2\times I $ along a convex torus isotopic to a fiber, where $ \lfloor \beta_I \rfloor $ is defined to be $n\pi  $ if $ n\pi \le \beta_I < (n+1)\pi  $. $ (M,\xi ) $ is called minimally twisting in the $ S^1- $direction if $ \beta_{S^1}< \pi $.

Now we are ready to state Honda's result in the non-minimal twisting case.
\begin{prop}[Proposition 2.3 of \cite{honda2000}]\label{honda}
	For a torus bundle with monodromy $ A $, there exist infinitely many tight contact structures with non-minimal twisting. The universally tight contact structures are distinguished by the $ S^1 $-twisting $ \beta_{S^1} $ which take values in $ \{ m\pi | m\in \ZZ_+ \} $. There exists virtually overtwisted contact structure only when $ A=\begin{pmatrix}
	1 & n\\ 0 &1
	\end{pmatrix} $, $ n>1 $.
\end{prop}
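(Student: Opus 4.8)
The plan is to follow Honda's convex-surface-theory analysis in \cite{honda2000}, organizing everything around cutting the torus bundle $M_A$ along convex fibers. First I would fix a tight contact structure $\xi$ on $M_A$, isotope a fiber $T$ to be convex, and cut along it to obtain a tight contact structure on $T^2\times I$ whose two boundary tori are convex with dividing sets identified under $A$. Honda's structure theorem for $T^2\times I$ says such a piece is an amalgamation of minimally twisting layers, each factoring as a chain of basic slices along a path in the Farey tessellation, so the total change of the dividing slope in the fiber direction is a well-defined ``rotation''. The first step is to check that $\beta_{S^1}$, the supremum over splittings of the floored $I$-twisting $\lfloor\beta_I\rfloor$, is a genuine isotopy invariant of $\xi$ taking values in $\{m\pi : m\in\ZZ_+\}$ once we are outside the minimally twisting regime; this follows from the edge-rounding formula for dividing slopes under gluing together with the uniqueness half of Honda's $T^2\times I$ classification.

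Next I would produce, for each $m\in\ZZ_+$, an explicit model realizing $\beta_{S^1}=m\pi$. On $T^2\times[0,1]$ take the rotational contact structure $\xi_m=\ker\bigl(\cos(\phi_m(t))\,dx-\sin(\phi_m(t))\,dy\bigr)$ with $\phi_m$ monotone increasing by $m\pi$ and endpoint slopes adjusted so that the gluing by $A$ is compatible; this descends to a contact structure on $M_A$. It is tight because it lifts to the same rotational form on $T^2\times\RR$, which is the standard ``infinitely twisting'' contact structure and is known to be tight; and it is universally tight because the universal cover of $M_A$ simply unwraps the fiber direction (and, in the hyperbolic case, the base), where one still sees only a rotational structure. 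The $\xi_m$ are pairwise non-isotopic since $\beta_{S^1}$ distinguishes them, which already yields the infinitely many non-minimally twisting tight contact structures claimed, all of them universally tight.

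The main step is the converse classification: any tight $\xi$ on $M_A$ with $\beta_{S^1}\ge\pi$ must be isotopic to some $\xi_m$ unless $A=\begin{pmatrix}1&n\\0&1\end{pmatrix}$ with $n>1$, in which case finitely many extra virtually overtwisted examples appear. Here I would cut $M_A$ along a maximal family of parallel convex fibers, record the sign sequence of the resulting basic slices, and normalize it using the bypass ``shuffling'' relations. When $A$ is hyperbolic, elliptic, or $-I$ times a parabolic, the action of $A$ on dividing slopes forces the normalized sequence to be exactly the alternating pattern that reassembles into a $\xi_m$; when $A=\begin{pmatrix}1&n\\0&1\end{pmatrix}$ with $n>1$, the continued fraction of $n$ leaves $n-1$ independent sign choices, and a non-generic choice produces a tight contact structure on $M_A$ containing a non-rotational basic-slice block, hence one that becomes overtwisted in the finite cover unwrapping that block. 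Tightness of these exceptional structures on $M_A$ itself is checked with a gluing criterion for tightness, and their virtual overtwistedness with the fact that an incorrectly signed basic slice, once repeated, encloses an overtwisted disk.

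The hardest part is this last dichotomy: showing that for every non-parabolic $A$ there is no ``hidden'' tight contact structure of non-rotational type, i.e. that any sign sequence not of the rotational form already bounds an overtwisted disk inside $M_A$. This is where Honda's full classification of tight $T^2\times I$ and a careful tracking of how basic-slice signs transform under the Farey-graph action of $A$ are indispensable, and I expect the exceptional parabolic family to demand the most delicate bookkeeping, since there one must simultaneously construct the virtually overtwisted examples and count them correctly.
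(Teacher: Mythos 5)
The paper does not actually prove this statement: it is imported verbatim as Proposition 2.3 of \cite{honda2000} and used as a black box, so there is no internal argument to compare yours against. Judged instead as a reconstruction of Honda's own proof, your outline correctly identifies the machinery he uses --- cutting along convex fibers, the factorization of tight $T^2\times I$ into basic slices along the Farey tessellation, invariance of $\beta_{S^1}$, rotational models realizing each value $m\pi$, and a sign-sequence analysis isolating the positive parabolic exceptions.

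However, as written it is a table of contents rather than a proof: every load-bearing step (well-definedness of $\beta_{S^1}$ ``via the edge-rounding formula together with the uniqueness half of Honda's $T^2\times I$ classification,'' tightness of the rotational models, the gluing criterion for the exceptional structures, and above all the completeness claim that no non-rotational sign sequence survives for non-parabolic $A$) is deferred to precisely the theorems whose proofs constitute Honda's paper. Two concrete cautions if you intend to fill this in. First, universal tightness of the rotational $\xi_m$ on a hyperbolic bundle requires passing to the genuine universal cover $\RR^3$, not merely the infinite cyclic cover $T^2\times\RR$; tightness there is a theorem of Giroux and Kanda, not something that ``is known'' without argument. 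Second, the existence, tightness, and detection of the virtually overtwisted structures for $A=\begin{pmatrix}1 & n\\ 0 & 1\end{pmatrix}$, $n>1$, is the most delicate point: tightness needs a genuine gluing argument and virtual overtwistedness a covering-space argument, and neither follows formally from the heuristic that ``an incorrectly signed basic slice, once repeated, encloses an overtwisted disk.'' For the purposes of this paper the appropriate proof is simply the citation.
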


So to decide whether a tight contact torus bundle is universally tight, it suffices to show it is non-minimal twisting, except the positive parabolic cases mentioned above. In order to calculate $ S^1 $-twisting from the divisor, we utilize the explicit open book decomposition for contact torus bundles described by Van-Horn-Morris. Let \textit{Word} denote the set of words in $ \{ a,a^{-1},b,b^{-1}  \} $. To $ a $ and $ b^{-1} $ we associate corresponding relative open book decompositions with pages and monodromies as in Figure \ref{figure:relativeOBD}. The relative open books for $ a^{-1} $ and $ b $ are the same as that  with sign reversed.
\begin{figure}[h]
	\includegraphics[scale=0.6]{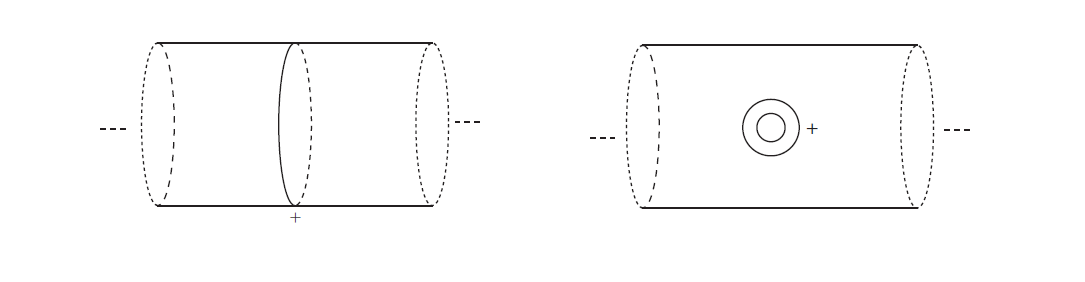}
	\centering
	\caption{Relative open book for $ a $ (left) and $ b^{-1} $ (right)}
	\label{figure:relativeOBD}
\end{figure}

To any word $ w\in \textit{Word} $, we can then associate an open book decomposition $ \mbf{ob}_w=(\Sigma_w,\phi_w) $ with torus pages $ \Sigma_w $ by stringing together the annular regions associated to each letter in $ w $ and identifying the remaining pair of circle boundaries to form a many-punctured torus as in Figure \ref{figure:torusOBD}. The monodromy $ \phi_w $ is given by Dehn twists along the all the signed curves. Denote the corresponding contact manifold by $ (Y_w,\xi_w ) $.
\begin{figure}[t]
	\includegraphics[scale=0.45]{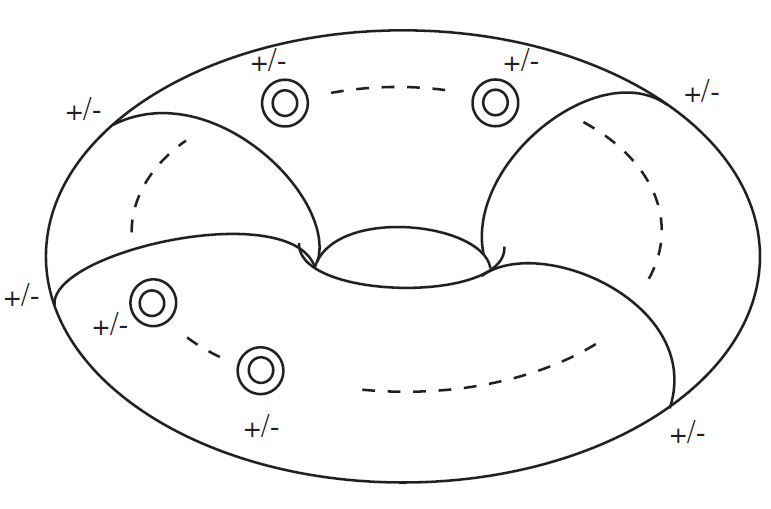}
	\centering
	\caption{Open book decomposition of torus bundles}
	\label{figure:torusOBD}
\end{figure}
\begin{lemma}[\cite{VHM-thesis}]
	Suppose words $ w,v $ are related by a sequence of braid relations $$ b^{-1}a^{-1}b^{-1}=a^{-1}b^{-1}a^{-1} .$$ Then the associated open book decompositions $ \mbf{ob}_w $ and $ \mbf{ob}_v $ are stably equivalent and thus their supported contact structures are isotopic.
\end{lemma}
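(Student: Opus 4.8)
The plan is to show directly that the open book decompositions $\mathbf{ob}_w$ and $\mathbf{ob}_v$ attached to two words $w,v$ differing by a single braid substitution $b^{-1}a^{-1}b^{-1}=a^{-1}b^{-1}a^{-1}$ become isotopic after a bounded number of positive stabilizations, and then invoke the Giroux correspondence (recalled in Section \ref{section:openbook}) to conclude that the supported contact structures $\xi_w$ and $\xi_v$ are isotopic. Since an arbitrary sequence of braid relations is a composition of single substitutions, it suffices to treat one substitution, and since the annular blocks away from the substituted subword are glued in identically, the whole problem is local: I would isolate the portion of the punctured-torus page coming from the three letters being rewritten, together with a collar of the gluing circles on either side, and compare the two resulting ``local'' open books with marked boundary.

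First I would fix explicit models. Following Figure \ref{figure:relativeOBD}, each letter contributes an annular strip to the page with one or two Dehn-twist curves running through it; three consecutive letters therefore produce a subsurface $\Sigma_0$ of the many-punctured torus with a prescribed boundary and a monodromy word $\tau$ that is a product of three (signed) Dehn twists, in one of the two orders dictated by $b^{-1}a^{-1}b^{-1}$ versus $a^{-1}b^{-1}a^{-1}$. The key classical input is the chain/lantern-type identity: on a suitable small subsurface (an annulus or once-punctured disk sitting inside $\Sigma_0$), the three Dehn twists $\tau_{c_1}\tau_{c_2}\tau_{c_1}$ and $\tau_{c_2}\tau_{c_1}\tau_{c_2}$ are \emph{equal} as mapping classes whenever $c_1,c_2$ are simple closed curves meeting once — this is precisely the braid relation in the mapping class group, $\tau_{c_1}\tau_{c_2}\tau_{c_1}=\tau_{c_2}\tau_{c_1}\tau_{c_2}$. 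So the two candidate monodromies are already conjugate (indeed equal after an isotopy of the page that swaps the roles of the two curves), and the two open books differ only by how this local piece is presented, i.e.\ by a change of the chosen cut system — exactly the kind of discrepancy that Giroux stabilization is designed to absorb.

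Next I would upgrade this mapping-class equality to a stable equivalence of open books. The standard mechanism: two open books with the same underlying $3$-manifold whose monodromies are related by conjugation and by such a ``Hurwitz-type'' reordering of Dehn-twist factors are related by a sequence of positive (de)stabilizations — one stabilizes $\mathbf{ob}_w$ a fixed small number of times (attaching $1$-handles along $\partial\Sigma_0$ and composing with right Dehn twists along cocore-dual curves, as in the Giroux-stabilization recollection in Section \ref{section:openbook}), performs the elementary curve slides realizing $\tau_{c_1}\tau_{c_2}\tau_{c_1}\simeq\tau_{c_2}\tau_{c_1}\tau_{c_2}$ inside the enlarged page, and destabilizes back down to reach $\mathbf{ob}_v$. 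Because all the twists involved are \emph{right} (positive) Dehn twists — visible from the sign conventions in Figure \ref{figure:relativeOBD} and Figure \ref{figure:torusOBD} — every stabilization used is positive, so the Giroux correspondence applies and gives an isotopy $\xi_w\simeq\xi_v$, not merely a homeomorphism of the bindings. I would then note that gluing the untouched annular blocks back on is compatible with all these moves, so the local statement globalizes verbatim.

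The main obstacle I anticipate is bookkeeping rather than conceptual: one has to track precisely which simple closed curves on the many-punctured torus the letters $a,a^{-1},b,b^{-1}$ produce, verify that the two curves implicated in a single braid substitution genuinely intersect once on the page (so that the mapping-class braid relation is available), and check that the stabilizations needed to realize the reordering are positive and can be undone — i.e.\ that no spurious negative twist is introduced when passing between the two cut systems. This is where I would be most careful, and it is essentially the content of Van-Horn-Morris's thesis; here we only need the qualitative consequence (isotopy of supported contact structures), so I would present the curve picture, cite the braid relation in the mapping class group, and appeal to Giroux's theorem to finish, leaving the fully explicit handle-by-handle verification to \cite{VHM-thesis}.
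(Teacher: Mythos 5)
The paper does not actually prove this lemma: it is imported wholesale from \cite{VHM-thesis}, so there is no internal argument to compare yours against. Judged on its own terms, your sketch has the right overall shape (localize to the three substituted letters, relate the two local monodromies, realize the discrepancy by positive stabilizations and destabilizations, invoke Giroux), but it contains a genuine gap at its central step.

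The problem is your claim that ``the two candidate monodromies are already conjugate (indeed equal after an isotopy of the page)'' via the mapping class group braid relation $\tau_{c_1}\tau_{c_2}\tau_{c_1}=\tau_{c_2}\tau_{c_1}\tau_{c_2}$. That identity requires the \emph{same} curve $c_1$ to appear twice. On the many-punctured torus page of $\mbf{ob}_w$, the two occurrences of the repeated letter in $b^{-1}a^{-1}b^{-1}$ contribute two \emph{distinct, parallel} curves lying in different annular blocks and separated by binding components, so they do not intersect the middle curve in the configuration the braid relation requires, and the two products of twists are genuinely different mapping classes of the punctured page; they become equal only after capping off, i.e.\ after destabilizing. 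Absorbing exactly this discrepancy by positive (de)stabilizations is the entire content of the lemma, and your proposal asserts it as ``the standard mechanism'' and then defers ``the fully explicit handle-by-handle verification to \cite{VHM-thesis}'' --- which makes the argument circular as a proof. A secondary error: your justification that every stabilization used is positive ``because all the twists involved are right (positive) Dehn twists'' is false in this setting. In the paper's conventions the letters $a^{-1}$ contribute \emph{negative} Dehn twists along the connect-sum necks (the monodromy is $(\tau_{\gamma_1}\dots\tau_{\gamma_l})^{-1}(\tau_{\delta_1}\dots\tau_{\delta_q})$), and in any case positivity of a stabilization concerns the twist along the new curve crossing the added handle, not the signs of the pre-existing monodromy factors; this step therefore needs a different justification.
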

Also it's clear that any cyclic permutation of the word does not change the associated open book decomposition at all. Adding canceling pairs of $ aa^{-1} $ or $ a^{-1}a $ leaves the page unchanged and only adds canceling pairs of Dehn twists $ \tau_c\tau_c^{-1} $ or $ \tau_c^{-1}\tau_c $ to the monodromy, which does not change the open book decomposition.

There is a natural map\[
\Phi:\textit{Word}\to Aut^+(T^2)\cong  \langle a^{-1},b^{-1}| b^{-1}a^{-1}b^{-1}=a^{-1}b^{-1}a^{-1} , (ab)^6=Id \rangle\cong  SL(2,\ZZ)
\]
defined by $ \Phi(a)=\begin{pmatrix}
1 & 1\\0 &1
\end{pmatrix} $, $ \Phi(b)=\begin{pmatrix}
1 & 0 \\ -1 & 1
\end{pmatrix} $, $ \Phi(a^{-1})=\Phi(a)^{-1} $ and $ \Phi(b^{-1})=\Phi(b)^{-1} $. Here $ Aut^+(T^2) $ is identified with $ SL(2,\ZZ ) $ by identifying $ T^2 $ with $ \RR^2/\ZZ^2 $. In the rest of this section, we will not distinguish between a word $ w $ and its image $ \Phi(w) $ in $ SL(2,\ZZ ) $ when we work with matrix multiplications. The 3-manifold supported by the open book decomposition $ \mbf{ob}_w $ is determined by the conjugacy class of $ \Phi(w) $ in $ SL(2,\ZZ ) $, but the contact structure varies when we take different words.

\begin{lemma}[\cite{VHM-thesis}]
	Let $ Y $ be the ambient manifold of the open book $ \mbf{ob}_w $. Then $ Y $ is homeomorphic to the torus bundle $ T_A$ with monodromy is $ A=\Phi(w) $.
\end{lemma}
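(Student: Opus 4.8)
The plan is to cut both $Y$ and the torus bundle with monodromy $A=\Phi(w)$ into the same cyclic sequence of thickened tori, one block per letter of $w$. Recall that a closed 3-manifold is attached to an abstract open book $(\Sigma,h)$ with $h|_{\partial\Sigma}=\mathrm{id}$ by forming the mapping torus $\mathrm{MT}(h)=\Sigma\times[0,1]/(x,1)\sim(h(x),0)$ and gluing a solid torus to each boundary torus $\partial_j\Sigma\times S^1$ of $\mathrm{MT}(h)$ in the unique way extending the open book; carried out for $(\Sigma_w,\phi_w)$ this produces $Y$. By the construction of $\mbf{ob}_w$, the page $\Sigma_w$ is divided by $|w|$ disjoint essential circles $\gamma_1,\dots,\gamma_{|w|}$ --- one neck per letter --- into regions $R_1,\dots,R_{|w|}$, and $\phi_w$ is supported letter by letter, $\phi_w=\phi_1\circ\cdots\circ\phi_{|w|}$, with $\phi_i$ living in $R_i$ and consisting of the signed Dehn twist prescribed by the $i$-th letter $\sigma_i$ together with the boundary-parallel twists needed to make $\phi_w$ the identity near $\partial\Sigma_w$. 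Equivalently $\mbf{ob}_w$ is the cyclic concatenation of the relative open books of Figure~\ref{figure:relativeOBD} assigned to $\sigma_1,\dots,\sigma_{|w|}$.

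First I would slice $Y$ open along the tori swept out by the $\gamma_i$: cutting $\mathrm{MT}(\phi_w)$ along the time-$0$ page and along each $\gamma_i\times[0,1]$ exhibits $Y$ as a cyclic union of $|w|$ blocks $B_1,\dots,B_{|w|}$, where $B_i$ is the mapping torus of $\phi_i|_{R_i}$ together with the solid tori filling the binding components lying over $R_i$, and consecutive blocks meet along the neck tori $\gamma_i\times S^1$. The crux is the \emph{local} assertion that each $B_i$ is diffeomorphic to $T^2\times I$ carrying the relative open book for $\sigma_i$ of Figure~\ref{figure:relativeOBD}. Granting this, $Y$ is a cyclic stack of $|w|$ thickened tori, hence a torus bundle $\mathrm{MT}(\Psi)$ over $S^1$ whose monodromy $\Psi$ is the composite of the gluing maps between consecutive blocks.

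It then remains to identify the $i$-th gluing map --- read in $SL(2,\ZZ)$ using the $H_1$-framing furnished by the marked curves of the construction --- with $\Phi(\sigma_i)$. This is a one-block computation: passing through the relative open book for $a$ (resp.\ $b^{-1}$) applies exactly the Dehn twist of Figure~\ref{figure:relativeOBD}, whose action on fibre-torus homology is the transvection $\Phi(a)=\left(\begin{smallmatrix}1&1\\0&1\end{smallmatrix}\right)$ (resp.\ $\Phi(b^{-1})=\left(\begin{smallmatrix}1&0\\1&1\end{smallmatrix}\right)$); the blocks for $a^{-1}$ and $b$ reverse the twist, and the boundary-parallel twists act trivially on $H_1$. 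Since stacking $T^2\times I$'s multiplies the associated matrices, the monodromy of the torus bundle is, up to the direction chosen around the stack and cyclic reordering of the blocks, the ordered product $\Phi(\sigma_1)\cdots\Phi(\sigma_{|w|})=\Phi(w)=A$. As a torus bundle over $S^1$ is homeomorphic to $T_B$ whenever its monodromy is conjugate to $B$ or to $B^{-1}$ in $SL(2,\ZZ)$, we conclude $Y\cong T_A$.

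The main obstacle is the local step --- that a one-letter block $B_i$ is $T^2\times I$ carrying the stated relative open book. One must see how the punctures of $\Sigma_w$ are apportioned among the $R_i$ so that the mapping torus of $\phi_i|_{R_i}$, after the prescribed Dehn filling, really closes up to a thickened torus, and check that the boundary-parallel twists hidden in $\phi_w$ affect only the framings at the neck tori, not the homeomorphism type of $B_i$. As an independent check I would compare fundamental groups: with the meridian framings of the open book tracked correctly, killing the meridians of the binding collapses $\pi_1(\mathrm{MT}(\phi_w))$ onto $\ZZ^2\rtimes_{\Phi(w)}\ZZ\cong\pi_1(T_A)$, and since closed torus bundles over $S^1$ are Haken, Waldhausen's theorem then re-proves $Y\cong T_A$ once $Y$ is known to be irreducible --- which in turn follows from the torus fibration produced above.
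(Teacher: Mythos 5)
The paper does not prove this lemma --- it is quoted directly from Van Horn-Morris's thesis --- so there is no in-paper argument to compare against; your proposal is essentially the standard argument behind the cited result. It is correct in outline, and the ``main obstacle'' you flag (that each one-letter block is $T^2\times I$ carrying the relative open book of Figure~\ref{figure:relativeOBD}) is in fact true by construction rather than something to be re-derived: $\mbf{ob}_w$ is \emph{defined} as the cyclic concatenation of those relative open books, each of which lives on a thickened torus, so $Y$ is a cyclic stack of copies of $T^2\times I$ from the outset and the only content is the $H_1$-computation identifying each gluing with the transvection $\Phi(\sigma_i)$, which you carry out correctly. Your closing remark handling the conjugacy/inversion ambiguity via the classification of torus bundles up to homeomorphism is the right way to absorb the choice of direction around the stack.
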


%Corresponding monodromy:\[
%a=\begin{pmatrix}
%1 & 1\\ 0&1
%\end{pmatrix}, b=\begin{pmatrix}
%1 & 0\\-1 & 1
%\end{pmatrix},a^{-1}=\begin{pmatrix}
%1 & -1\\0 & 1
%\end{pmatrix},b^{-1}=\begin{pmatrix}
%1 & 0\\1 & 1
%\end{pmatrix}
%\]

Using the open book decomposition constructed in Proposition \ref{construct-OBD}, we can associated to any non-negative circular spherical divisor $ D $ a word $ w(D) $ that is solely composed of $ a^{-1},b^{-1} $ as follows. 
Recall in our construction of open book decompositions, each vertex $ v_i $ with self-intersection $ s_i $ contributes $ (s_i+d_i) $ boundary-parallel positive Dehn twists in the monodromy and each edge contributes a negative Dehn twist along the connect-sum neck. Thus each vertex corresponds to the word $ b^{-2-s_i} $ and each edge corresponds to the word $ a^{-1} $. The word $ w(D) $ is obtained by taking product of all these words in the clockwise order. Then the divisor $ D=(s_1,\dots,s_l) $ corresponds to the word $ w(D)=b^{-2-s_1}a^{-1}\dots b^{-2-s_l}a^{-1} $. The open book decomposition $ \mbf{ob}_{w(D)} $ associated to this word is exactly the one constructed in Proposition \ref{construct-OBD} for $ (-Y_D,\xi_D ) $. 
\begin{prop}[\cite{VHM-thesis}]\label{weakly-fillable}
	Any word in $ \{ a,a^{-1},b^{-1}\} $ gives an open book decomposition compatible with a weakly fillable contact structure.
\end{prop}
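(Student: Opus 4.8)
The plan is to show that every open book $\mathbf{ob}_w$ with $w\in\{a,a^{-1},b^{-1}\}$ is supported by a contact structure that embeds in a closed symplectic $4$-manifold with the right compatibility at the boundary, so weak fillability follows by the standard capping criterion (Eliashberg--Etnyre). First I would reduce to the two elementary relative open books of Figure~\ref{figure:relativeOBD}: the relative page for each of $a$, $a^{-1}$, $b^{-1}$ is an annulus carrying a single $\pm$ Dehn twist, and stringing together the annuli for the letters of $w$ and gluing the leftover boundary circles produces the punctured-torus page $\Sigma_w$ with monodromy $\phi_w$ a product of the corresponding signed Dehn twists. So the task is to produce a weak symplectic filling of $(Y_w,\xi_w)$ whose existence is insensitive to how many $a^{\pm1}$ and how many $b^{-1}$ appear and in what order.

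Next I would build a Lefschetz fibration over the disk whose boundary open book is $\mathbf{ob}_w$. A positive Dehn twist in the monodromy is realized by a Lefschetz critical point; the obstruction is the negative Dehn twists coming from the $a^{-1}$ letters, which are \emph{not} allowed in an (allowable) Lefschetz fibration. The way around this is to note that in $\mathrm{Aut}^+(T^2)$ we have $\Phi(a)=\begin{pmatrix}1&1\\0&1\end{pmatrix}$ and $\Phi(b^{-1})=\begin{pmatrix}1&0\\1&1\end{pmatrix}$, and the letter $a^{-1}$ can be traded, up to the braid relation $b^{-1}a^{-1}b^{-1}=a^{-1}b^{-1}a^{-1}$ and the relation $(ab)^6=\mathrm{Id}$ already recorded in the presentation of $\mathrm{Aut}^+(T^2)$ above, together with stabilization, for a word involving only $b^{-1}$'s and $a$'s, i.e.\ only positive Dehn twists. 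Concretely: $a^{-1} = (ba b) \cdot (\text{stuff})$-type substitutions coming from $(ab)^6=\mathrm{Id}$ let one rewrite any word in $\{a^{-1},b^{-1}\}$ as a word in $\{a,b^{-1}\}$ after inserting canceling pairs and applying braid moves, without changing the supported contact structure (by the two $\mathrm{VHM}$ lemmas quoted just above, stable equivalence $\Rightarrow$ isotopic contact structure). Once $w$ is presented with only positive twists, $\mathbf{ob}_w$ bounds an allowable Lefschetz fibration over $D^2$, hence $(Y_w,\xi_w)$ is Stein fillable, in particular weakly fillable.

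Alternatively, and perhaps more robustly, I would argue directly on the torus-bundle side using Honda's classification (Proposition~\ref{honda}): for any $A\in SL(2,\mathbb{Z})$ the torus bundle $T_A$ carries a weakly (indeed Stein) fillable tight contact structure, and the point is to identify $\xi_w$ with a member of that family. The $I$-twisting computation shows that a word consisting only of $a^{-1}$ and $b^{-1}$ (all monodromy twists ``turning the same way'') yields a contact structure with minimal twisting in the $S^1$-direction, and minimally twisting tight contact torus bundles are exactly the ones Honda and Giroux show to be Stein fillable. Inserting $a^{\pm1}$ pairs, which is the only extra freedom allowed in Proposition~\ref{weakly-fillable} beyond pure $b^{-1}$ words, does not change the open book up to stabilization, hence does not change the contact isotopy class, so fillability is preserved.

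The main obstacle is the bookkeeping in the second paragraph: controlling the $a^{-1}$ letters. One must check that the substitutions used to eliminate negative Dehn twists are genuinely realized by stabilizations and braid relations (so that the \emph{contact structure}, not merely the underlying $3$-manifold, is unchanged), and that the resulting positive factorization is compatible with the disk as base --- i.e.\ that no monodromy relation forces a negative twist to reappear. Handling the case where $\Phi(w)$ is parabolic or elliptic (so the ``Lefschetz fibration over $D^2$'' might degenerate, e.g.\ $(ab)^6=\mathrm{Id}$ collapsing the word) requires separate care, but in those cases one can fall back on Honda's explicit weakly fillable models. Everything else --- identifying the page and monodromy of $\mathbf{ob}_w$, invoking Eliashberg--Etnyre's theorem that a contact $3$-manifold supported by an open book that caps off to a symplectic Lefschetz fibration is weakly fillable, and the stable-equivalence lemmas --- is standard.
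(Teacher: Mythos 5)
The paper does not actually prove this proposition: it is imported as a black box from \cite{VHM-thesis}, where weak fillability is obtained by identifying $\xi_w$ with tight contact structures on torus bundles for which explicit weak fillings are known (Giroux, Ding--Geiges, Honda), not by producing a positive factorization of the monodromy. So your proposal must stand on its own, and while you correctly locate the crux --- the negative Dehn twists contributed by the letters $a^{-1}$ --- your way of dealing with them fails.

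The fatal step in your main route is the use of the relation $(ab)^6=\mathrm{Id}$ to trade $a^{-1}$ for a word in $\{a,b^{-1}\}$. That relation holds in $SL(2,\ZZ)$ but not at the level of open books: the only moves that preserve $\mbf{ob}_w$ up to positive stabilization are the braid relation, cyclic permutation, and insertion of cancelling pairs, i.e.\ you may only modify the word within its class in the braid group $\langle a,b\mid aba=bab\rangle$, of which $SL(2,\ZZ)$ is a proper central quotient ($(ab)^6$ is a nontrivial central element there). Applying $(ab)^6=\mathrm{Id}$ changes that class and with it the contact structure --- this is exactly the mechanism by which infinitely many non-isotopic tight structures, distinguished by $\beta_{S^1}$ as in Proposition \ref{honda}, live on a single torus bundle. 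Concretely, the words $(b^{-1}a^{-1})^{6k}$ all have image $\mathrm{Id}$ in $SL(2,\ZZ)$ and all give $T^3$, but $(b^{-1}a^{-1})^{6k}=(aba)^{-4k}$ rotates vectors by $2k\pi$, so the resulting contact structures have $\beta_{S^1}\ge 2k\pi$ and are pairwise non-isotopic; they are all weakly fillable, yet by Eliashberg only one tight contact structure on $T^3$ is strongly (hence Stein) fillable. Your reduction would make all of them Stein fillable, and more generally would prove Stein fillability for the entire family, contradicting \cite{GoLi14} and \cite{LiMaMi-logCYcontact}. The fallback route is also unsound: you assert that words in $\{a^{-1},b^{-1}\}$ give minimally twisting structures, whereas the proof of Theorem \ref{thm:universally-tight} computes $\mathfrak{c}_w\ge\pi$, hence $\beta_{S^1}\ge\pi$, for precisely these words; and it is false that every word in $\{a,a^{-1},b^{-1}\}$ differs from a pure $b^{-1}$-word only by cancelling $a^{\pm1}$ pairs (already $b^{-1}a^{-1}b^{-1}a^{-1}$ does not). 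A correct proof has to construct weak fillings directly (or match $\xi_w$ with the Giroux/Ding--Geiges models); weak fillability is genuinely weaker than Stein fillability here and cannot be reached through Lefschetz fibrations over the disk.
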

Since $ w(D) $ is a word in $ \{a^{-1},b^{-1} \} $, we have that $ (Y_D,\xi_D) $ is weakly fillable, and in particular, tight. We will usually write $ w $ for $ w(D) $ as our choice of divisor $ D $ would be clear from the context.
% \begin{figure}
% 	\begin{tikzpicture}
% 	\node (x1) at (0,0) [circle,fill,outer sep=5pt,scale=0.5] [label=above:$ s_1 $] {};
% 	\node (x2) at (2,0) [circle,fill,outer sep=5pt,scale=0.5] [label=above:$ s_2 $] {};
% 	\node (x3) at (4,0) [circle,fill,outer sep=5pt,scale=0.5] [label=above:$ s_3 $] {};
% 	\node (y) at (6,0){\ldots\ldots\ldots};
% 	\node (x5) at (8,0) [circle,fill,outer sep=5pt,scale=0.5] [label=above:$ s_{l-1} $] {};
% 	\node (x6) at (10,0) [circle,fill,outer sep=5pt,scale=0.5] [label=above:$ s_l $] {};
% 	\draw (x1) to (x2);
% 	\draw (x2) to (x3);
% 	\draw (x5) to (x6);
% 	\draw (x1) to[bend right=20] (x6);
% 	%		\draw[pattern=dots] (6,0)
% 	\end{tikzpicture}
% 	\centering
% 	\caption{Circular divisor $ D $}
% 	\label{figure:divisor}
% \end{figure}

From Honda's classification, we know that if $ \beta_{S^1}\ge \pi $ and the monodromy is not conjugate to $ a^n,n>1 $, then the contact structure is universally tight. We can compute the $ S^1- $twisting of a contact structure from the word associated to its compatible open book decomposition as in \cite{VHM-thesis}. For a word $ w=a^{k_1}b^{-1}\dots a^{k_l}b^{-1} $, which we read from left to right as we move from $ t=0 $ to $ t=1 $. To compute the change of angles, we end with $ V_l=(1,0)^T $ and work backwards to $ t=0 $. Then $ V_{l-1}=a^{k_l}b^{-1}(1,0)^T $, so on and so forth. Let $ \mathfrak{c}_w $ denote the total angle change, then $ \beta_{S^1} $ of $ (Y_w,\xi_w) $ is at least $ \lfloor \mathfrak{c}_w \rfloor  $. Note that when calculating $ \beta_{S^1} $, we are free to change the word by braid relation, cyclic permutation and adding canceling pairs of $ a $ and $ a^{-1} $.

\begin{example}
	Consider parabolic bundle $ (-Y_D,\xi_D) $ given by the concave divisor in the following graph, with $ -2\le n $ so that the graph is non-negative. Its monodromy is $ A=A(-n,0)^{-1}=-\begin{pmatrix}
	1 & -n\\0 & 1
	\end{pmatrix} $.\begin{center}
		\begin{tikzpicture}
		\node (x) at (0,0) [circle,fill,outer sep=5pt,scale=0.5] [label=left:$ n $] {};
		\node (y) at (2,0) [circle,fill,outer sep=5pt,scale=0.5] [label=right:$ 0 $] {};
		\draw (x) to[bend left=30] (y);
		\draw (x) to[bend right=30] (y);
		\end{tikzpicture}
	\end{center}

	The word associated to this divisor is $ a^{-1}(b^{-1})^{n+2}a^{-1}(b^{-1})^2$.	Through cyclic permutation it becomes
	$w=b^{-1}a^{-1}(b^{-1})^{n+2}a^{-1}b^{-1}$.
	We can check that\[
	b^{-1}a^{-1}(b^{-1})^{n+2}a^{-1}b^{-1}\begin{pmatrix}
	1\\0
	\end{pmatrix}=\begin{pmatrix}
	-1\\0
	\end{pmatrix}.
	\]
	The rotation is $ \mathfrak{c}_w=\pi $. So $ \beta_{S^1}\ge \pi  $ and the contact structure is universally tight. 
\end{example}

\begin{proof}[Proof of Theorem \ref{thm:universally-tight}]
	We start by noticing that a word corresponding to a concave divisor $ D $ of length $l$ takes the form $ a^{-1}b^{-2-s_1}\dots a^{-1}b^{-2-s_l} $.
	By Proposition \ref{prop:contact toric eq}, we may assume $D$ is toric minimal or $D=(-1,p)$. Then $D$ is still non-positive by Lemma \ref{lemma:toric non-negative} and thus concave, we must have $b^+(Q_D)\ge 1$ by Proposition 5.12 of \cite{LiMaMi-logCYcontact}. Then we may further assume either $s_i\ge 0$ for some $i$, or $D=(-1,-2)$ or $(-1,-1)$, by Lemma 2.4 of \cite{LiMaMi-logCYcontact}.

	Assume $s_i\ge 0$ for some $i$. By cyclic permuting, we may assume $ s_l\ge 0 $ and the word becomes $w= b^{-1-s_l}\dots a^{-1}b^{-2-s_{l-1}}a^{-1}b^{-1} $ with $ -1-s_l\le -1 $. If $ -2-s_{l-1}=0 $, then $w$ can be written as the product $ w=w'b^{-1}a^{-n}b^{-1} $ with $ n\ge 2 $ for some word $ w' $ in $ \{ a^{-1},b^{-1} \} $. If $ -2-s_{l-1}\le -1 $, then $ w=w'b^{-1}a^{-l}b^{-m}a^{-1}b^{-1} $ with $ l\ge 1 $ and $ m\ge 0 $.
	
	% Case(2): Again we may assume $ s_l\ge -1 $ and the word is $ w=b^{-1-s_l}\dots a^{-1}b^{-2-s_{l-1}}a^{-1}b^{-1} $. If $ -2-s_{l-1}=0 $, then there exists a $ j\neq l $ such that $ -2-s_j\le -1 $ and thus $ w $ can be written as $ w=w'b^{-1}a^{-n}b^{-1} $. If $ -2-s_{l-1}\le -1 $, then $ w=w'b^{-1}a^{-l}b^{-m}a^{-1}b^{-1} $ with $ l\ge 1 $ and $ m\ge 0 $.
	
	The following direct computation shows that both $ b^{-1}a^{-n}b^{-1} $ and $ b^{-1}a^{-l}b^{-m}a^{-1}b^{-1} $ rotates the vector $ (1,0)^T $ by at least $ \pi  $:
	\begin{align*}
	b^{-1}a^{-n}b^{-1}\begin{pmatrix}
	1\\0
	\end{pmatrix}&=\begin{pmatrix}
	1-n \\ 2-n
	\end{pmatrix}, n\ge 2;\\ b^{-1}a^{-l}b^{-m}a^{-1}b^{-1}\begin{pmatrix}
	1\\0
	\end{pmatrix}&=\begin{pmatrix}
	-l \\ 1-l
	\end{pmatrix}, l\ge 1, m\ge 0.
	\end{align*}
	By adding canceling pairs of $ a $ and $ a^{-1} $, $ w' $ can always be written as product of $ (aba)^{-1} $, $ a $ and $ a^{-1} $. Note that $ a $, $ a^{-1} $ preserve the half space a vector sits in and $ (aba)^{-1} $ rotates a vector by $ \dfrac{\pi}{2} $ in the counterclockwise direction. As a result, $ w' $ does not rotate the vector back to the upper half space. So $ \mathfrak{c}_w\ge \pi  $ and $ \beta_{S^1}\ge \pi  $ for the contact structure induced on the boundary. By Proposition \ref{honda}, the contact structure is universally tight except when $ Y $ is torus bundle with monodromy $ A=a^n=\begin{pmatrix}
	1 & n\\ 0 &1
	\end{pmatrix},n>1 $. The remaining case of $ (-1,-1) $ and $ (-1,-2) $ follows from Proposition 4.1 of \cite{GoLi14}.
\end{proof}

\appendix
\section{Invariance of contact structure}
\subsection{Contact structure and toric equivalence}\label{section:contact toric}
In this section we prove the first statement of Proposition \ref{prop:contact toric eq}. We want to show that toric blow-up on the divisor doesn't change the induced contact structure on boundary of plumbing. The construction in this section will be adapted a little to prove the second statement of Proposition \ref{prop:contact toric eq} in the next section.

First we introduce the blow-up of an augmented graph, which is the symplectic version of toric blow-up. Consider the following local picture of an augmented graph (on the left), where each vertex is decorated by its self-intersection number, genus and symplectic area. The blow-up of this augmented graph with weight $ 2\pi a_0 $ is given on the right, which is the toric blow-up with areas specified in the graph. We call this an \textbf{augmented toric blow-up of edge $ e_0 $}. Similarly, the reverse operation is called an \textbf{augmented toric blow-down}.

\[
\begin{tikzpicture}[baseline=0ex]
\node (x) at (0,0) [circle,fill,outer sep=5pt, scale=0.5] [label=above:$ {(s_1,g_1,a_1)} $][label=below:$ v_1 $] {};
\node (y) at (2,0) [circle,fill,outer sep=5pt,scale=0.5] [label=above:$ {(s_2,g_2,a_2)} $][label=below:$ v_2 $] {};
\draw (x) to (y);
\node at (1,0) [label=below:$ e_0 $]{};
\end{tikzpicture}	
\quad\Longrightarrow\quad 
\begin{tikzpicture}[baseline=0ex]
\node (x) at (0,0) [circle,fill,outer sep=5pt, scale=0.5] [label=above:$ {(s_1-1,g_1,a_1-2\pi a_0)} $][label=below:$ v_1 $] {};
\node (y) at (3,0) [circle,fill,outer sep=5pt, scale=0.5] [label=above:$ {(-1,0,2\pi a_0)} $][label=below:$ v_0 $] {};
\node (z) at (6,0) [circle,fill,outer sep=5pt, scale=0.5] [label=above:$ {(s_2-1,g_2,a_2-2\pi a_0)} $][label=below:$ v_2 $] {};
\node at (1.5,0) [label=below:$ e_1 $]{};
\node at (4.5,0) [label=below:$ e_2 $]{};
\draw (x) to (y);
\draw (z) to (y);
\end{tikzpicture}	
\]
%Label the original edge connecting $ v_1 $ and $ v_2 $ by $ e_0 $. A symplectic toric blow-up and the edge after blow-up connecting $ v_0 $ to $ v_i $ by $ e_i $. 
Denote the original augmented graph by $ (\Gamma^{(1)},a^{(1)}) $ and the blown-up graph $ (\Gamma^{(2)},a^{(2)}) $.  Note that $ Q_{\Gamma^{(2)}}z^{(2)}=a^{(2)} $ is still solvable after the augmented toric blow-up. If $ z^{(1)}=(z_1,z_2,\dots ) $ and $ a^{(1)}=(a_1,a_2,\dots ) $ satisfy $ Q_{\Gamma^{(1)}}z^{(1)}=z^{(1)} $, then after blow-up of area $ a_0 $, $ z^{(2)}=(z_1,z_1+z_2-2\pi a_0,z_2,\dots ) $ and $ a^{(2)}=(a_1-2\pi a_0,2\pi a_0,a_2-2\pi a_0,\dots) $ satisfy $ Q_{\Gamma^{(2)}}z^{(2)}=z^{(2)} $. So we could apply GS construction to both augmented graphs. In the following, we will denote the construction based on $ (\Gamma^{(1)},a^{(1)}) $ by GS-1 and denote the construction based on $ (\Gamma^{(2)},a^{(2)}) $ by GS-2. 

For the choice of $ \{s_{v,e} \} $, note that the two graphs differ only near $ e_0 $. 
%Recall that for each vertex $ v $ and each edge $ e $ connecting to $ v $, we need to choose integers $ s_{v,e} $ such that $ \sum s_{v,e}=s_v $. 
We could choose $ \{ s_{v,e} \} $ for GS-1 first and then choose the same $ \{s_{v,e}\} $ for all vertices and edges for GS-2, except the ones involved in the toric blow-up. We could choose $ s_{v_0,e_1}=0$, $s_{v_0,e_2}=-1 $ so that $ s_{v_0,e_1}+s_{v_0,e_2}=s_0=-1 $ and choose $ s_{v_1,e_1}=s_{v_1,e_0}-1 $, $ s_{v_2,e_2}=s_{v_2,e_0}-1 $. Then we have $ x_{v_1,e_1}=x_{v_1,e_0}-a_0 $, $ x_{v_2,e_2}=x_{v_2,e_0}-a_0 $, $ x_{v_0,e_1}=-z_1' $ and $ x_{v_0,e_2}=z_1'+a_0 $. The choice of other parameters will be specified later. Note that the choice of parameters won't affect the boundary contact structure by Proposition \ref{prop:unique-contact}.

%, $ \epsilon $, $ \delta $ and $ g $. In GS-2, we choose  Then pick the same $ s_{v,e} $ for all other vertices and edges.

%For GS-1, the edge $ e $ corresponds to a region with toric picture $ R_{e_0} $ as in Figure \ref{fig GS construction}. Symplectically blow-up the point $ (z_1',z_2') $ with area $ 2\pi a_0 $ to get to Figure \ref{fig: GS construction after blow-up}, where $ a_0<\epsilon $. Here the thick line segment represents the exceptional sphere. Choose $ \epsilon'=\epsilon-\dfrac{a_0}{2} $ to be the epsilon used in GS-2. We will specify $ \delta'  $ and $ g' $ later.

\begin{figure}[h!] %region R_{e_0} and blow-up
	\centering
	\begin{tikzpicture}[scale=1.3]
	% Draw axes
	\draw [<->,thick] (0.5,4.5) node (yaxis) [above] {$y$}
	|- (6,0) node (xaxis) [right] {$x$};
	
	% Draw four straight lines
	\draw (1.5,0.9) coordinate (a_1) -- (4.5,0.9) coordinate (a_2); %long horizontal
	\draw (1.5,0.9) coordinate (b_1) -- (1.5,3.9) coordinate (b_2); %long vertical
	\draw (3.2,1.8) coordinate (d_1)-- (5.1,1.8) coordinate (d_2);   %short horizontal
	\draw (2.4,2.6) coordinate (e_1)-- (2.4,4.5) coordinate (e_2); %short vertical
	
	% Draw middle bend line
	\draw (d_1) to [bend left] (e_1);
	
	% Draw four skew lines
	\draw (b_2) -- (e_2);
	\draw (a_2) -- (d_2);
	\draw[dashed] (1.5,2.4) coordinate (g_1)-- (2.4,3) coordinate (g_2); %slope=(1,2/3)
	\draw[dashed] (3,0.9) coordinate (h_1)-- (3.6,1.8) coordinate (h_2); %slope=(2/3,1)
	
	% Denote gluing regions
	\draw (3.7,1.3) node [right] {$R_{v_2,e_0}^{(1)} $};
	\draw (1.5,3.3) node [right] {$R_{v_1,e_0}^{(1)} $};
	
	% Calculate the intersection of the lines a_1 -- a_2 and b_1 -- b_2
	% and store the coordinate in c.
	\coordinate (c) at (intersection of a_1--a_2 and b_1--b_2);
	% Draw lines indicating intersection with y and x axis. Here we use
	% the perpendicular coordinate system
	\draw[dashed] (yaxis |- c) node[left] {$z_{2}'$}
	-| (xaxis -| c) node[below] {$z_{1}'$};
	\draw[dashed] (1.5,2.4) -- (0.5,2.4) node[left] {$z_{2}'+\epsilon^{(1)}$};
	\draw[dashed] (1.5,3.9) -- (0.5,3.9) node[left] {$z_{2}'+2\epsilon^{(1)}$};
	\draw[dashed] (3,0.9) -- (3,0);\draw (2.6,0) node[below] {$z_{1}'+\epsilon^{(1)}$};
	\draw[dashed] (4.5,0.9) -- (4.5,0);\draw (4.8,0) node[below] {$z_{1}'+2\epsilon^{(1)}$ };        
	
	%blow-up line
	\draw[thick,red] (1.5,2.7) -- (3.3,0.9);
	\draw[dashed] (1.5,2.7) -- (0.5,2.7) node[left] {$ z_2'+a_0 $};
	\draw[dashed] (3.3,0.9) -- (3.3,0); \draw (3.6,0) node[below] {$ z_1'+a_0 $};
	
	% Denote slope
	\draw (e_2) node[right] {$ \begin{pmatrix}
		1\\ -s_{v_1,e_0}
		\end{pmatrix} $};
	\draw (d_2) node[above] {$ \begin{pmatrix}
		-s_{v_2,e_0}\\1
		\end{pmatrix} $};
	\end{tikzpicture}
	
	\caption{Region $ R_{e_0}^{(1)} $ with moment map $ \mu_{e_0} $}
	\label{fig: GS construction after blow-up}
\end{figure}
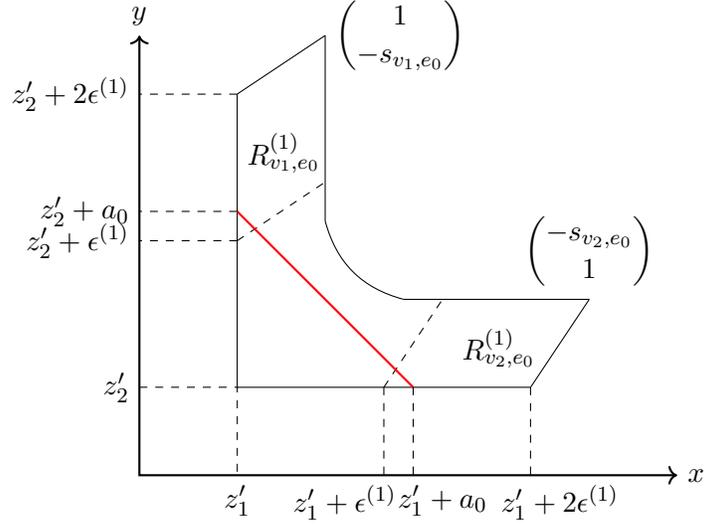

\begin{figure}[h!] % Toric picture of edges $ e_1,e_2 $ in GS-2
	\centering
	\begin{subfigure}{0.4\textwidth}
		\begin{tikzpicture}[scale=1] % 1--0 e_1
		% Draw axes
		\draw [<->,thick] (0,4.5) node (yaxis) [above] {$y$}
		|- (6,0) node (xaxis) [right] {$x$};
		
		% Draw four straight lines
		\draw (1.5,0.9) coordinate (a_1) -- (4.5,0.9) coordinate (a_2); %long horizontal
		\draw (1.5,0.9) coordinate (b_1) -- (1.5,3.9) coordinate (b_2); %long vertical
		\draw (2.9,1.5) coordinate (d_1)-- (4.5,1.5) coordinate (d_2);   %short horizontal
		\draw (2.1,2.1) coordinate (e_1)-- (2.1,4.9) coordinate (e_2); %short vertical
		
		% Draw middle bend line
		\draw (d_1) to [bend left] (e_1);
		
		% Draw four skew lines
		\draw (b_2) -- (e_2);
		\draw (a_2) -- (d_2);
		\draw[dashed] (1.5,2.4) coordinate (g_1)-- (2.1,3.4) coordinate (g_2); %slope=(1,5/3)
		\draw[dashed] (3,0.9) coordinate (h_1)-- (3,1.5) coordinate (h_2); %slope=(0,1)
		
		% Denote gluing regions
		\draw (3.5,1.2) node [right] {\scalebox{0.6}{$R_{v_0,e_1}^{(2)} $}};
		\draw (1.4,3.7) node [right] {\scalebox{0.6}{$R_{v_1,e_1}^{(2)} $}};
		
		% Calculate the intersection of the lines a_1 -- a_2 and b_1 -- b_2
		% and store the coordinate in c.
		\coordinate (c) at (intersection of a_1--a_2 and b_1--b_2);
		% Draw lines indicating intersection with y and x axis. Here we use
		% the perpendicular coordinate system
		\draw[dashed] (yaxis |- c) node[left] {\scalebox{0.6}{$z_1'+z_{2}'+a_0$}}
		-| (xaxis -| c) node[below] {\scalebox{0.6}{$z_{1}'$}};
		\draw[dashed] (1.5,2.4) -- (0,2.4) node[left] {\scalebox{0.6}{$z_1'+z_{2}'+a_0+\epsilon^{(2)}$}};
		\draw[dashed] (1.5,3.9) -- (0,3.9) node[left] {\scalebox{0.6}{$z_1'+z_{2}'+a_0+2\epsilon^{(2)}$}};
		\draw[dashed] (3,0.9) -- (3,0) node[below] {\scalebox{0.6}{$z_{1}'+\epsilon^{(2)}$}};
		\draw[dashed] (4.5,0.9) -- (4.5,0) node[below] {\scalebox{0.6}{$z_{1}'+2\epsilon^{(2)}$}};        
		
		% Denote slope
		\draw (2.1,4.7) node[right] {\scalebox{0.6}{$ \begin{pmatrix}
				1\\ -s_{v_1,e_1}
				\end{pmatrix}=\begin{pmatrix}
				1\\ -s_{v_1,e_0}+1
				\end{pmatrix} $}};
		\draw (d_2) node[above] {\scalebox{0.6}{$ \begin{pmatrix}
				-s_{v_0,e_1}\\1
				\end{pmatrix}=\begin{pmatrix}
				0 \\1
				\end{pmatrix} $}};
		\end{tikzpicture}
		\caption{$ R_{e_1}^{(2)} $}
	\end{subfigure}
	\hfill
	\begin{subfigure}{0.4\textwidth}
		\begin{tikzpicture}[scale=1] % 0--2 e_2
		% Draw axes
		\draw [<->,thick] (0,4.5) node (yaxis) [above] {$y$}
		|- (6,0) node (xaxis) [right] {$x$};
		
		% Draw four straight lines
		\draw (1.5,0.9) coordinate (a_1) -- (4.5,0.9) coordinate (a_2); %long horizontal
		\draw (1.5,0.9) coordinate (b_1) -- (1.5,3.9) coordinate (b_2); %long vertical
		\draw (2.9,1.5) coordinate (d_1)-- (5.5,1.5) coordinate (d_2);   %short horizontal
		\draw (2.1,2.1) coordinate (e_1)-- (2.1,4.5) coordinate (e_2); %short vertical
		
		% Draw middle bend line
		\draw (d_1) to [bend left] (e_1);
		
		% Draw four skew lines
		\draw (b_2) -- (e_2);
		\draw (a_2) -- (d_2);
		\draw[dashed] (1.5,2.4) coordinate (g_1)-- (2.1,3) coordinate (g_2); %slope=(1,1)
		\draw[dashed] (3,0.9) coordinate (h_1)-- (4,1.5) coordinate (h_2); %slope=(5/3,1)
		
		% Denote gluing regions
		\draw (3.8,1.1) node [right] {\scalebox{0.6}{$R_{v_0,e_2}^{(2)} $}};
		\draw (1.4,3.5) node [right] {\scalebox{0.6}{$R_{v_2,e_2}^{(2)} $}};
		
		% Calculate the intersection of the lines a_1 -- a_2 and b_1 -- b_2
		% and store the coordinate in c.
		\coordinate (c) at (intersection of a_1--a_2 and b_1--b_2);
		% Draw lines indicating intersection with y and x axis. Here we use
		% the perpendicular coordinate system
		\draw[dashed] (yaxis |- c) node[left] {\scalebox{0.6}{$z_{2}'$}}
		-| (xaxis -| c);\draw (1.3,0) node[below] {\scalebox{0.6}{$z_1'+z_{2}'+a_0$}};
		\draw[dashed] (1.5,2.4) -- (0,2.4) node[left] {\scalebox{0.6}{$z_{2}'+\epsilon^{(2)}$}};
		\draw[dashed] (1.5,3.9) -- (0,3.9) node[left] {\scalebox{0.6}{$z_{2}'+2\epsilon^{(2)}$}};
		\draw[dashed] (3,0.9) -- (3,0) node[below] {\scalebox{0.6}{$z_1'+z_{2}'+a_0+\epsilon^{(2)}$}};
		\draw[dashed] (4.5,0.9) -- (4.5,0);\draw (5,0) node[below] {\scalebox{0.6}{$z_1'+z_{2}'+a_0+2\epsilon^{(2)}$}}; 
		
		% Denote slope
		\draw (e_2) node[right] {\scalebox{0.6}{$ \begin{pmatrix}
				1\\ -s_{v_0,e_2}
				\end{pmatrix}=\begin{pmatrix}
				1\\ 1
				\end{pmatrix} $}};
		\draw (d_2) node[above] {\scalebox{0.6}{$ \begin{pmatrix}
				-s_{v_2,e_2}\\1
				\end{pmatrix}=\begin{pmatrix}
				-s_{v_2,e_0}+1 \\1
				\end{pmatrix} $}};
		\end{tikzpicture}
		\caption{$ R_{e_2}^{(2)} $}
	\end{subfigure}
	\caption{Toric picture of edges $ e_1,e_2 $ in GS-2}
	\label{fig:two edges}
\end{figure}
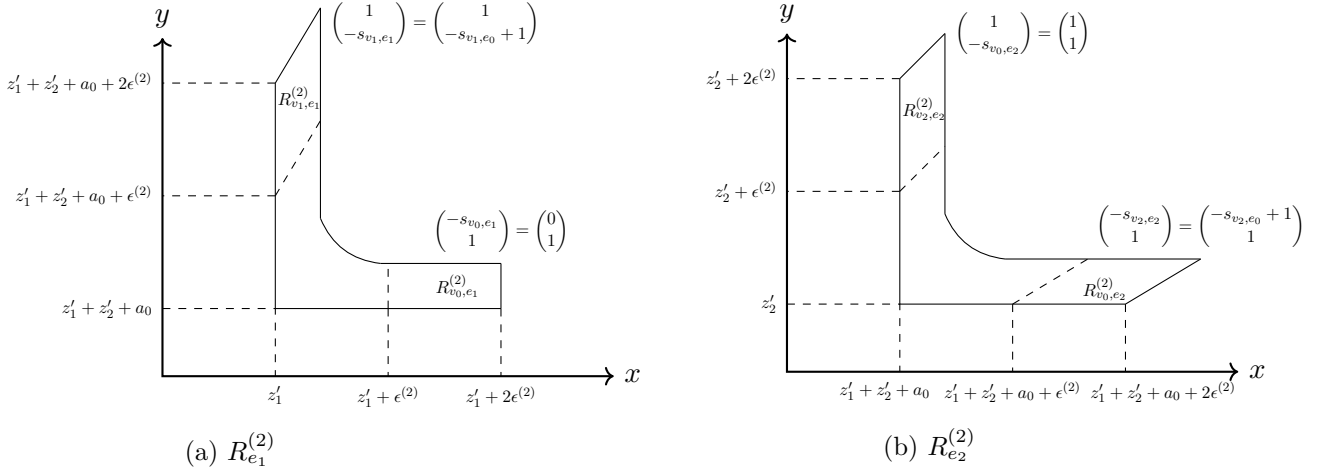

\begin{figure}[h!] % Region $ R_{e_0}^{(1)} $ after blow-up.
	\centering
	\begin{tikzpicture}[scale=2]
	% Draw axes
	\draw [<->,thick] (0.5,4.5) node (yaxis) [above] {$y$}
	|- (6,0) node (xaxis) [right] {$x$};
	
	% Draw four straight lines
	\draw (3,0.9) coordinate (a_1) -- (4.5,0.9) coordinate (a_2); %long horizontal
	\draw (1.5,2.4) coordinate (b_1) -- (1.5,3.9) coordinate (b_2); %long vertical
	\draw (3.2,1.8) coordinate (d_1)-- (5.1,1.8) coordinate (d_2);   %short horizontal
	\draw (2.4,2.6) coordinate (e_1)-- (2.4,4.5) coordinate (e_2); %short vertical
	
	\draw[red,thick,dashed] (1.5,2.9) -- (1.8,3.1);
	\draw[red,thick,dashed] (3.5,0.9)--(3.7,1.2);
	
	% Draw middle bend line
	\draw (d_1) to [bend left] (e_1);
	
	% Draw four skew lines
	\draw (b_2) -- (e_2);
	\draw (a_2) -- (d_2);
%	\draw[dashed] (1.5,2.4) coordinate (g_1)-- (2.4,3) coordinate (g_2); %slope=(1,2/3)
%	\draw[dashed] (3,0.9) coordinate (h_1)-- (3.6,1.8) coordinate (h_2); %slope=(2/3,1)
	
	% Denote gluing regions
	\draw (4,1) node [right] {\scalebox{0.6}{$R_{v_2,e_2}^{(1)} $}};
	\draw (1.45,3.5) node [right] {\scalebox{0.6}{$R_{v_1,e_1}^{(1)} $}};
	\draw (1.8,1.6) node [left] {\scalebox{0.6}{$ R_{v_0,e_1}^{(1)} $}};
	\draw (2.3,1.1) node [left] {\scalebox{0.6}{$ R_{v_0,e_2}^{(1)} $}};
	\draw (1.75,1.65)--(2.15,1.95); \draw (2.25,1.15)--(2.55,1.5);
	\draw (1.45,2.6) node[right] {\scalebox{0.8}{$ R_{e_1}^{(1)} $}};
	\draw (3.2,0.9) node [above] {\scalebox{0.8}{$ R_{e_2}^{(1)} $}};
	\draw (2.35,1.55) node [above] {\scalebox{0.8}{$ R_{v_0}^{(1)} $}};
	
	% Draw new region
	% vertical and horizontal curve
	\draw[red,thick] (1.8,4.1)--(1.8,2.7);
	\draw[red,thick] (4.7,1.2)--(3.3,1.2);
	% short slope (1,-1) curve
	\draw[red,thick] (2,2.3)--(2.2,2.1);
	\draw[red,thick] (2.5,1.8)--(2.7,1.6);
	% slope (0,1) curve
	\draw[red,thick] (2.2,1.7)--(2.2,2.1);
	\draw[red,thick] (2.5,1.4)--(2.5,1.8);
	\draw[red,thick,dashed] (2.115,2.2)--(2.115,1.8);
	\draw[red,thick,dashed] (2.615,1.7)--(2.615,1.3);
	
	% bending curve
	\draw[red,thick] (2,2.3) to [bend left] (1.8,2.7);
	\draw[red,thick] (3.3,1.2) to [bend left] (2.7,1.6);
	% Vertex region corresponds to exceptional sphere
	\draw[dashed] (2.1,2.2)--(2.6,1.7);
	\draw[dashed] (2.1,2.2)--(2.1,1.8);
	\draw[dashed] (2.6,1.7)--(2.6,1.3);
	
	% Calculate the intersection of the lines a_1 -- a_2 and b_1 -- b_2
	% and store the coordinate in c.
	\coordinate (c) at (intersection of a_1--a_2 and b_1--b_2);
	% Draw lines indicating intersection with y and x axis. Here we use
	% the perpendicular coordinate system
	%	\draw[dashed] (yaxis |- c) node[left] {$z_{2}'$}
	%	-| (xaxis -| c) node[below] {$z_{1}'$};
	
%	\draw[dashed] (1.5,2.4) -- (0.5,2.4) node[left] {$z_{2}'+\epsilon$};
	\draw[dashed] (1.5,3.9) -- (0.5,3.9) node[left] {$z_{2}'+2\epsilon^{(1)}$};
	\draw[dashed] (1.5,2.9) --(0.5,2.9) node[left] {$ z_2'+a_0+\epsilon^{(2)} $};
	
%	\draw[dashed] (3,0.9) -- (3,0) node[below] {$z_{1}'+\epsilon$};
	\draw[dashed] (4.5,0.9) -- (4.5,0);\draw (4.8,0) node[below] {$z_{1}'+2\epsilon^{(1)}$};        
	\draw[dashed] (3.5,0.9)--(3.5,0);\draw (3.8,0) node[below] {$ z_1'+a_0+\epsilon^{(2)} $};
	
	%blow-up line
	\draw[thick] (1.5,2.4) -- (3,0.9);
	\draw[dashed] (1.5,2.4) -- (0.5,2.4 ) node[left] {$ z_2'+a_0 $};
	\draw[dashed] (3,0.9) -- (3 ,0 ); \draw (2.9,0) node[below] {$ z_1'+a_0 $};
	
	% Denote slope
	\draw (e_2) node[right] {$ \begin{pmatrix}
		1\\ -s_{v_1,e_0}
		\end{pmatrix} $};
	\draw (d_2) node[above] {$ \begin{pmatrix}
		-s_{v_2,e_0}\\1
		\end{pmatrix} $};
	\end{tikzpicture}
	
	\caption{Region $ R_{e_0}^{bl} $ after blow-up.}
	\small{
		$ R_{e_1}^{(1)} $ is the region on the upper left, enclosed by black and red solid lines.\\
		$ R_{e_2}^{(1)} $ is the region on the lower right, enclosed by black and red solid lines.\\
		$ R_{v_0}^{(1)} $ is the rectangular region in the middle, enclosed black dashed and solid lines.\\
		$ R_{v_0,e_1}^{(1)},R_{v_0,e_2}^{(1)} $ are the small rectangular regions in the middle bounded by both red and black lines.\\
		$ R_{v_1,e_1}^{(1)},R_{v_2,e_2}^{(1)} $ are the small parallelogram regions on the upper left and lower right.
	}
	\label{fig:blow-up together}
\end{figure}

\begin{figure}[h!] %Zoomed picture of vertex region $ R_{v_0} $
	\centering
	\begin{tikzpicture}[scale=4]
	% Draw axes
	\draw [<->,thick] (1.2,2.5) node (yaxis) [above] {$y$}
	|- (3,0.5) node (xaxis) [right] {$x$};
	%blow-up line
	\draw[thick] (1.5,1.9) -- (2.5,0.9);
	\draw[dashed] (1.5,1.9) -- (1,1.9 ) node[left] {$ z_2'+a_0 $};
	\draw[dashed] (2.5,0.9) -- (2.5 ,0.5);\draw (2.65,0.5) node[below] {$ z_1'+a_0 $};
	% Vertex region corresponds to exceptional sphere
	\draw[dashed] (2.3375,1.4775)--(1.6875,2.1275);
	\draw[dashed] (1.6875,2.1275)--(1.6875,1.7125);
	\draw[dashed] (2.3375,1.4775)--(2.3375,1.0625);
	% slope (0,1) curve
	\draw[red,thick] (1.875,1.95)--(1.875,1.525);
	\draw[red,thick] (2.175,1.65)--(2.175,1.225);
	% short slope (1,-1) curve
	\draw[red,thick] (1.6,2.225)--(1.875,1.95);
	\draw[red,thick] (2.5,1.325)--(2.175,1.65);
	% bending curve
%	\draw[red,thick] (2,2.05) to [bend left] (1.8,2.4);
%	\draw[red,thick] (3,1.2) to [bend left] (2.7,1.35);
	
	\draw (1.9,2.4) node [right] {\scalebox{0.8}{$ R_{v_0,e_1}^{(1)} $}};
	\draw (2.3,2) node [right] {\scalebox{0.8}{$ R_{v_0,e_2}^{(1)} $}};
	\draw (2,1.45) node [above] {\scalebox{1}{$ R_{v_0}^{(1)} $}};
	\draw (1.95,2.35)--(1.8,1.8);\draw (2.35,1.95)--(2.25,1.4);

	\draw[dashed] (1.875,1.525) -- (1.2,1.525 ) node[left] {$ z_2'+a_0-2\epsilon^{(2)} $};
	\draw[dashed] (1.875,1.525) -- (1.875,0.3 ) node[below] {$ z_1'+2\epsilon^{(2)} $};
	\draw[dashed] (2.175,1.225) -- (1.2 ,1.225) node[left] {$ z_2'+2\epsilon^{(2)} $};
	\draw[dashed] (2.175,1.225) -- (2.175 ,0.5) node[below] {$ z_1'+a_0-2\epsilon^{(2)} $};
	\draw[dashed] (1.6875,1.7125) -- (1.2,1.7125 ) node[left] {$ z_2'+a_0-\epsilon^{(2)} $};
	\draw[dashed] (1.6875,1.7125) -- (1.6875,0.5 ) node[below] {$ z_1'+\epsilon^{(2)} $};
	\draw[dashed] (2.3375,1.0625) -- (1.2,1.0625 ) node[left] {$ z_2'+\epsilon^{(2)} $};
	\draw[dashed] (2.3375,1.0625) -- (2.3375,0.3 );\draw (2.4,0.3) node[below] {$ z_1'+a_0-\epsilon^{(2)} $};
	
	\end{tikzpicture}
	\caption{Zoomed picture of vertex region $ R_{v_0}^{(1)} $}
	\label{fig:vertex-exceptional sphere}
\end{figure}
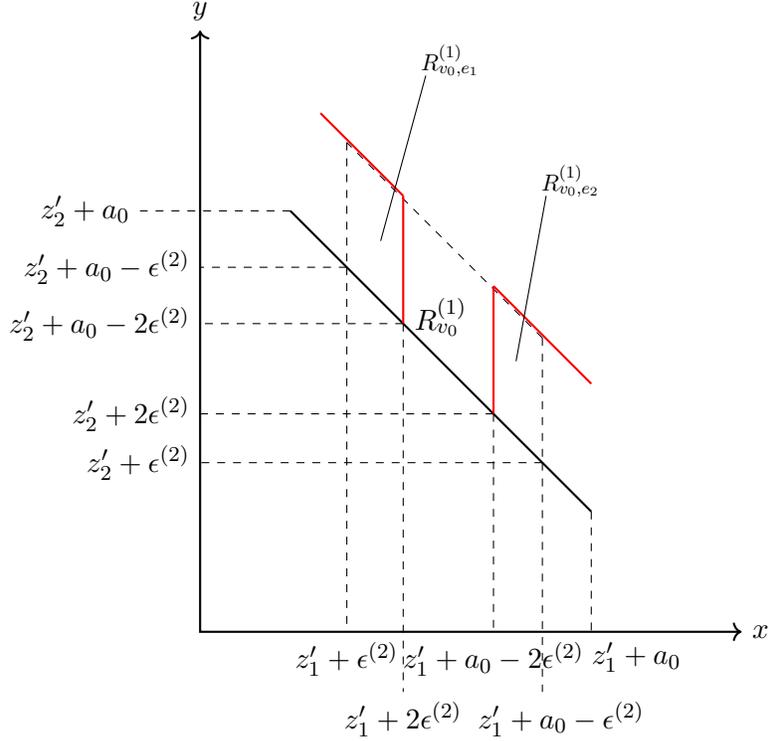

In GS-1, the edge $ e_0 $ corresponds to the local model $ (X_{e_0}^{(1)},C_{e_0}^{(1)},\omega_{e_0}^{(1)},V_{e_0}^{(1)},f_{e_0}^{(1)}) $ with toric image $ R_{e_0}^{(1)} $ in Figure \ref{fig: GS construction after blow-up}. The gluing region $ R_{v_1,e_0}^{(1)} $ is characterized by the vector $ \begin{pmatrix}
1 \\ -s_{v_1,e_0}
\end{pmatrix} $ and $ R_{v_2,e_0}^{(1)} $ is characterized by $ \begin{pmatrix}
-s_{v_2,e_0}\\ 1
\end{pmatrix} $.

In GS-2, the edge $ e_1 $ corresponds to the local model $ (X_{e_1}^{(2)},C_{e_1}^{(2)},\omega_{e_1}^{(2)},V_{e_1}^{(2)},f_{e_1}^{(2)}) $ with toric image $ R_{e_1}^{(2)} $ as in Figure \ref{fig:two edges}(a) with gluing region $ R_{v_1,e_1}^{(2)} $ characterized by vector $ \begin{pmatrix}
1\\ - s_{v_1,e_1}
\end{pmatrix}=\begin{pmatrix}
1\\ -s_{v_1,e_0}+1
\end{pmatrix} $ and $ R_{v_0,e_1}^{(2)} $ characterized by $ \begin{pmatrix}
-s_{v_0,e_1}\\1
\end{pmatrix} =\begin{pmatrix}
0 \\ 1
\end{pmatrix}$. Using the transformation $ \begin{pmatrix}
1 & 0\\-1 & 1
\end{pmatrix}\in GL(2,\ZZ ) $, we could map $ R_{e_1}^{(2)} $ onto $ R_{e_1}^{(1)}$ in Figure \ref{fig:blow-up together}. This gives a symplectomorphism $ \Phi_{e_1}:(\mu_{e_1}^{-1}(R_{e_1}^{(2)}),\omega_{e_1}^{(2)})\to  (\mu_{e_0}^{-1}(R_{e_1}^{(1)}),\omega_{e_0}^{(1)} )$ and identifies the Liouville vector field $ V_{e_1}^{(2)} $ with $ V_{e_0}^{(1)} $. Similarly, the edge $ e_1 $ corresponds to the local model $ (X_{e_2}^{(2)},C_{e_2}^{(2)},\omega_{e_2}^{(2)},V_{e_2}^{(2)},f_{e_2}^{(2)}) $ with toric image $ R_{e_2}^{(2)} $ as in Figure \ref{fig:two edges}(b) with gluing region $ R_{v_0,e_2}^{(2)} $ characterized by $ \begin{pmatrix}
1\\ - s_{v_0,e_2}
\end{pmatrix}=\begin{pmatrix}
1\\ 1
\end{pmatrix} $ and $ R_{v_2,e_2}^{(2)} $ by $ \begin{pmatrix}
-s_{v_2,e_2}\\1
\end{pmatrix} =\begin{pmatrix}
-s_{v_2,e_0}+1 \\ 1
\end{pmatrix}$. Using the transformation $ \begin{pmatrix}
1 & -1\\0 & 1
\end{pmatrix}\in GL(2,\ZZ ) $, we could map $ R_{e_2}^{(2)} $ onto $ R_{e_2}^{(1)}$ in Figure \ref{fig:blow-up together}. This gives symplectomorphism $\Phi_{e_2}: (\mu_{e_2}^{-1}(R_{e_2}^{(2)}),\omega_{e_2}^{(2)})\to  (\mu_{e_0}^{-1}(R_{e_2}^{(1)}),\omega_{e_0}^{(1)}) $, and identifies the Liouville vector field $ V_{e_2}^{(2)} $ with $ V_{e_0}^{(1)} $.

For vertex $ v_0 $, take $ X_{v_0}^{(2)}=[-z_1'-a_0+\epsilon^{(2)},-z_1'-\epsilon^{(2)}]\times S^1\times D^2_{\sqrt{2\delta^{(2)}}} $, $ \omega_{v_0}^{(2)}=dt\wedge d\alpha + rdr\wedge d\theta  $ and $ V_{v_0}^{(2)}=t\partial_t+(\dfrac{r}{2}+\dfrac{z_0'}{r})\partial_r $. So we see that the local model $ (X_{v_0}^{(2)},C_{v_0}^{(2)},\omega_{v_0}^{(2)},V_{v_0}^{(2)},f_{v_0}^{(2)}) $ is exactly $ (\mu_{e_0}^{-1}(R_{v_0}^{(1)}),\mu_{e_0}^{-1}(L),\omega_{e_0}^{(1)},V_{e_0}^{(1)},f_{e_0}^{(1)}) $, where $ L $ is the line segment from point $ (z_1'+\epsilon^{(2)},z_2'+a_0-\epsilon^{(2)}) $ to $ (z_1'+a_0-\epsilon^{(2)},z_2'+\epsilon^{(2)}) $ in Figure \ref{fig:vertex-exceptional sphere}. We can check the gluing of $ \mu^{-1}_{e_0}(R_{e_1}^{(1)}) $ with $\mu^{-1}_{e_0}( R_{v_0}^{(1)}) $ along $ \mu^{-1}_{e_0}(R_{v_0,e_1}^{(1)}) $ coincides with the gluing of $\mu^{-1}_{e_1}( R_{e_1}^{(2)}) $ with $ X_{v_0}^{(2)} $ along $ \mu^{-1}_{e_1}(R_{v_0,e_1}^{(2)}) $. Similarly, the gluing along $ \mu^{-1}_{e_0}(R_{v_0,e_2}^{(1)}) $ coincides with the gluing along $ \mu^{-1}_{e_2}(R_{v_0,e_2}^{(2)}) $. So the glued local model $ X_{e_1}^{(2)}\cup X_{v_0}^{(2)}\cup X_{e_2}^{(2)} $ is symplectomorphic to the preimage of the region $ R_{e_1}^{(1)}\cup R_{v_0}^{(1)}\cup R_{e_2}^{(1)} $ with Liouville vector fields identified.

Blow up the intersection point in $ P(D^{(1)}) $ corresponding to the edge $ e_0 $ symplectically with area $ 2\pi a_0 $ to get $ (P(D^{(1)})\# \overline{\CC\PP}^2,\omega^{bl}) $. This corresponds to cutting the corner from $ R_{e_0}^{(1)} $as shown in Figure \ref{fig GS construction} and the resulting region is called $ R_{e_0}^{bl} $. Since blowing up an interior point doesn't change the boundary, we have $ (Y_{D^{(1)}},\xi_{D^{(1)}})= \partial(P(D^{(1)}),\omega^{(1)}) \cong \partial(P(D^{(1)})\#\overline{\CC\PP}^2,\omega^{bl}) $.

To make the intervals in Figure \ref{fig:blow-up together} and Figure \ref{fig:vertex-exceptional sphere} well-defined, the following inequalities must be satisfied: \begin{align*}
2\epsilon^{(2)}<a_0-2\epsilon^{(2)} \text{ and } z_i'+a_0+2\epsilon^{(2)}\le z_i'+2\epsilon^{(1)}, i=1,2.
\end{align*}Also, in order for the embeddings and the blow-up to remain inside the neighborhood $ X^{(1)}_{e_0} $, the following restrictions on sizes of these neighborhoods should be satisfied:
\[
\delta^{(2)}\le \delta^{(1)} \text{ and } a_0<2\delta^{(1)}.
\]
So we could choose $ \delta^{(1)}, \delta^{(2)}, \epsilon^{(1)}, \epsilon^{(2)}, a_0 $ so that they satisfy $ 0<\delta^{(2)}\le \delta^{(1)}, 0<\epsilon^{(2)}<\epsilon^{(1)} $, $ a_0=2\epsilon^{(1)}-2\epsilon^{(2)} $ and $ 4\epsilon^{(2)}<a_0<2\delta^{(1)} $. Such choice of $ a_0 $ ensures that there is enough area to blow-up and the interval in $ X_{v_0}^{(2)} $ is well defined.
So the region  $ R_{e_1}^{(1)}\cup R_{v_0}^{(1)}\cup R_{e_2}^{(1)} $ is embedded in $ R^{(1)}_{e_0} $. Since all other local models are the same for GS-1 and GS-2, by shrinking the region $ R^{(1)}_{e_0} $, we get an contact isotopy from $ (Y_{D^{(1)}},\xi_{D^{(1)}})\cong \partial(P(D^{(1)})\#\overline{\CC\PP}^2,\omega^{bl}) $ to $  (Y_{D^{(2)}},\xi_{D^{(2)}}) =\partial (P(D^{(2)}),\omega^{(2)}) $.

\subsection{Half edge and blow-up of a vertex}\label{section:interior inv}
The construction outlined in Section \ref{section:GS} actually only works for graphs with at least two vertices, but it can be modified to take care of the single vertex case. Now consider the augmented graph $ (\Gamma ,a) $ where $ \Gamma  $ has only one vertex $ v $ decorated with genus $ g $ and self-intersection $ s $. As long as $ s\neq 0 $, there is always a solution $ z=\dfrac{a}{s} $.

According to GS construction, the vertex $ v $ corresponds to a local model $ (X_v,C_v,\omega_v,V_v,f_v) $. Here $ X_v=\Sigma_v\times D^2_{\sqrt{2\delta}}  $ where $ \Sigma_v $ is a genus $ g $ surface with one boundary component. To close up and get a disk bundle over a closed genus $ g $ surface with Euler class $ s $, we need to glue $ X_v $ to a disk bundle over disk and add the suitable twisting. Consider the region $ R_{\tilde{e}} $ in Figure \ref{fig:half edge}, which is similar to the region $ R_{e} $ in Figure \ref{fig GS construction} except we only have one gluing region $ R_{v,\tilde{e}} $. This region gives a local model $ (X_{\tilde{e}},C_{\tilde{e}},\omega_{\tilde{e}},V_{\tilde{e}},f_{\tilde{e}}) $ in the same way as the ordinary GS construction. Note that $ x_{\tilde{e}}=\mu_{\tilde{e}}^{-1}(R_{\tilde{e}})\cong D^2\times D^2_{\sqrt{2\delta}} $. Here the gluing region is specified by the vector $ \begin{pmatrix}
1 \\ -s
\end{pmatrix} $. By gluing these two local models, we get the desired disk bundle.

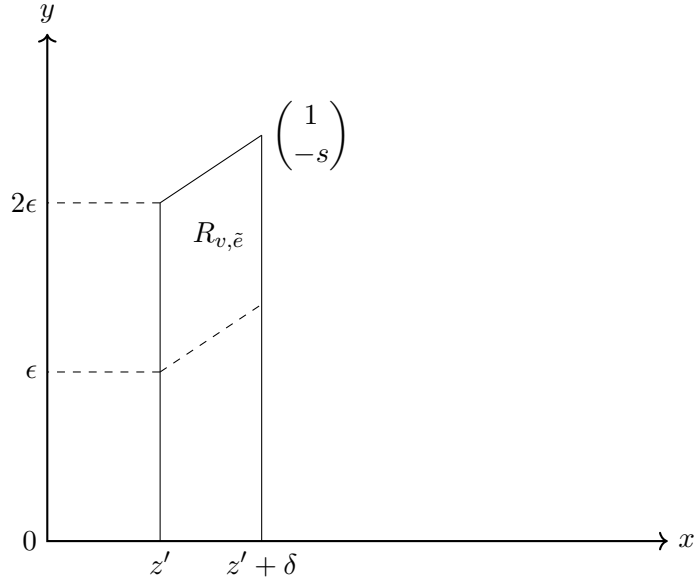
\begin{figure}[h] % region R_\tilde{e}
	\centering
	\begin{tikzpicture}[scale=1.5]
	% Draw axes
	\draw [<->,thick] (0.5,4.5) node (yaxis) [above] {$y$}
	|- (6,0) node (xaxis) [right] {$x$};
	
	% Draw four straight lines
%	\draw (1.5,0.9) coordinate (a_1) -- (4.5,0.9) coordinate (a_2); %long horizontal
	\draw (1.5,0) coordinate (b_1) -- (1.5,3) coordinate (b_2); %long vertical
%	\draw (3.2,1.8) coordinate (d_1)-- (5.1,1.8) coordinate (d_2);   %short horizontal
	\draw (2.4,0) coordinate (e_1)-- (2.4,3.6) coordinate (e_2); %short vertical
	
	% Draw middle bend line
%	\draw (d_1) to [bend left] (e_1);
	
	% Draw four skew lines
	\draw (b_2) -- (e_2);
%	\draw (a_2) -- (d_2);
	\draw[dashed] (1.5,1.5) coordinate (g_1)-- (2.4,2.1) coordinate (g_2); %slope=(1,2/3)
%	\draw[dashed] (3,0.9) coordinate (h_1)-- (3.6,1.8) coordinate (h_2); %slope=(2/3,1)
	
	% Denote gluing regions
%	\draw (3.8,1.3) node [right] {$R_{v_2,e_0}^{(1)} $};
	\draw (1.7,2.7) node [right] {$R_{v,\tilde{e}} $};
	
%	\draw[dashed] (yaxis |- c) node[left] {$0$}
%	-| (xaxis -| c) node[below] {$z_{1}$};
	\draw (1.5,0) node[below] {$ z' $};
	\draw (0.5,0) node[left]{$ 0 $};
	\draw[dashed] (1.5,1.5) -- (0.5,1.5) node[left] {$\epsilon$};
	\draw[dashed] (1.5,3) -- (0.5,3) node[left] {$2\epsilon$};
%	\draw[dashed] (3,0.9) -- (3,0);
	\draw (2.4,0) node[below] {$z'+\delta $};
%	\draw[dashed] (4.5,0.9) -- (4.5,0) node[below] {$z_{1}'+2\epsilon$};        
	
	% Denote slope
	\draw (e_2) node[right] {$ \begin{pmatrix}
		1\\ -s
		\end{pmatrix} $};
	\end{tikzpicture}
	\caption{Region $ R_{\tilde{e}} $ corresponding to the half edge $ \tilde{e} $}
	\label{fig:half edge}
\end{figure}

This region $ R_{\tilde{e}} $ works almost the same as an edge in ordinary GS construction and we call it a \textbf{half edge}, as shown below.
\[
\begin{tikzpicture}[baseline=0ex]
\node (x) at (0,0) [circle,fill,outer sep=5pt, scale=0.5] [label=above:$ {(s,g,a)} $][label=below:$ v $] {};
\end{tikzpicture}	
\quad\Longleftrightarrow\quad 
\begin{tikzpicture}[baseline=0ex]
\node (x) at (0,0) [circle,fill,outer sep=5pt, scale=0.5] [label=above:$ {(s,g,a)} $][label=below:$ v $] {};
\node (y) at (1,0) {};
\node at (1,0) [label=below:$ \tilde{e} $]{};
\draw (x) to (y);
\end{tikzpicture}	
\]
For any vertex $ v $ in an augmented graph $ (\Gamma,a) $, we have $ X_v\cong \Sigma_v \times D^2 $. Take any point $ p\in \Sigma_v  $ and a small disk neighborhood $ D^2 $ of $ p $. This local neighborhood $ D^2\times D^2 $ can be regarded as the local model $ X_{\tilde{e}} $ corresponding to a half edge $ \tilde{e} $. Here we could choose the parameter $ s_{v,\tilde{e}}=0 $ so that $ s_{v,\tilde{e}}+\sum_{\mc{E}(v)} s_{v,e}=s_v $.
For an augmented graph $ (\Gamma,a) $, let $ v $ be a vertex in $ \Gamma $. We introduce the symplectic version of interior blow-up and blow-down. The following is called an \textbf{augmented interior blow-up} of vertex $ v $ with weight $ a_0 $, of which the reverse operation is also called an \textbf{augmented interior blow-down}.
	\[
	\begin{tikzpicture}[baseline=0ex]
	\node (x) at (0,0) [circle,fill,outer sep=5pt, scale=0.5] [label=left:$ {(s,g,a)} $][label=below:$ v $] {};
	\node (y) at (-1,1) [label=left:$ \dots $]{};
	\node (z) at (-1,-1) [label=left:$ \dots $]{};
	\draw (x) to (y);
	\draw (x) to (z);
	\end{tikzpicture}	
	\quad\Longrightarrow\quad 
	\begin{tikzpicture}[baseline=0ex]
	\node (x) at (0,0) [circle,fill,outer sep=5pt, scale=0.5] [label=left:$ {(s-1,g,a-a_0)} $] {};
	\node at (0,0.1) [label=below:$ v $]{};
	\node (y) at (-1,1) [label=left:$ \dots $]{};
	\node (z) at (-1,-1) [label=left:$ \dots $]{};
	\draw (x) to (y);
	\draw (x) to (z);
	\node (e) at (2,0) [circle,fill,outer sep=5pt, scale=0.5] [label=above:$ {(-1,0,a_0)} $][label=below:$ v_0 $] {};
	\draw (x) to (e);
	\node (f) at (1,0) [label=below:$ \tilde{e} $]{};
	\end{tikzpicture}	
	\]

An augmented interior blow-up of vertex $ v $ can be regarded as the augmented toric blow-up of a half edge $ \tilde{e} $ stemming from $ v $ as shown in the following diagram, where the right arrow indicates an augmented toric blow-up of $ \tilde{e} $.
%follows. Suppose the blow-up is performed at point $ p $ of the component $ C_v $ of this divisor. 

\[
\begin{tikzpicture}[baseline=0ex]
\node (x) at (0,0) [circle,fill,outer sep=5pt, scale=0.5] [label=left:$ {(s,g,a)} $] {};
\node (y) at (-1,1) [label=left:$ \dots $]{};
\node (z) at (-1,-1) [label=left:$ \dots $]{};
\draw (x) to (y);
\draw (z) to (x);
\end{tikzpicture}	
\quad \Longleftrightarrow\quad	
\begin{tikzpicture}[baseline=0ex]
\node (x) at (0,0) [circle,fill,outer sep=5pt, scale=0.5] [label=left:$ {(s,g,a)} $] {};
\node (y) at (-1,1) [label=left:$ \dots $]{};
\node (z) at (-1,-1) [label=left:$ \dots $]{};
\draw (x) to (y);
\draw (z) to (x);
\node (e) at (2,0){};
\node at (1,0) [label=below:$ \tilde{e} $]{};
\draw (e) to (x);
\end{tikzpicture}\Longrightarrow\]
\[
\begin{tikzpicture}[baseline=0ex]
\node (x) at (0,0) [circle,fill,outer sep=5pt, scale=0.5] [label=left:$ {(s-1,g,a-a_0)} $] {};
%\node at (0,0.3) [label=right:$ v $]{};
\node (y) at (-1,1) [label=left:$ \dots $]{};
\node (z) at (-1,-1) [label=left:$ \dots $]{};
\draw (x) to (y);
\draw (z) to (x);
\node (e) at (1,0) [circle,fill,outer sep=5pt, scale=0.5] [label=above:$ {(-1,0,a_0)} $] {};

\draw (x) to (e);
\draw (e) to (2,0);
\node (f) at (1.5,0) [label=below:$ \tilde{e}' $]{};
\end{tikzpicture}	
\quad\Longleftrightarrow\quad
\begin{tikzpicture}[baseline=0ex]
\node (x) at (0,0) [circle,fill,outer sep=5pt, scale=0.5] [label=left:$ {(s-1,g,a-a_0)} $] {};
%\node at (0,0.3) [label=right:$ v $]{};
\node (y) at (-1,1) [label=left:$ \dots $]{};
\node (z) at (-1,-1) [label=left:$ \dots $]{};
\draw (x) to (y);
\draw (z) to (x);
\node (e) at (1,0) [circle,fill,outer sep=5pt, scale=0.5] [label=above:$ {(-1,0,a_0)} $] {};

\draw (x) to (e);

\end{tikzpicture}	
\]

The construction from Section \ref{section:contact toric} also works for regions like $ R_{\tilde{e}} $ with a suitable choice of $ \epsilon $ and $ a_0 $. So we have that toric blowing up a half edge $ \tilde{e} $ doesn't change its boundary contact structure. Thus we conclude that the boundary contact structure is invariant under interior blow-up of a vertex.

\bibliographystyle{plain}
\bibliography{ICCM}{}

\end{document}